\newcounter{magicrownumbers}
\newcommand\rownumber{\refstepcounter{magicrownumbers}\arabic{magicrownumbers}}
\newtheorem{thmm}{Theorem}[section]
\newtheorem{remark}[thmm]{Remark}
\newtheorem{example}[thmm]{Example}
\newtheorem{definition}[thmm]{Definition}
\newtheorem{proposition}[thmm]{Proposition}
\newtheorem{lemma}[thmm]{Lemma}
\newtheorem{corollary}[thmm]{Corollary}
\newtheorem*{thmA*}{Theorem A}
\newtheorem*{thmB*}{Theorem B}
\newtheorem*{thmC*}{Theorem C}
\begin{document}
\bibliographystyle{amsplain}

\relax
\renewcommand{\v}{\varepsilon} \newcommand{\p}{\rho}
\newcommand{\m}{\mu}
\def\im{{\rm Im}}
\def\ker{{\rm Ker}}
\def\End{{\rm End}}
\def\Pic{{\bf Pic}}
\def\re{{\bf re}}
\def\e{{\bf e}}
\def\a{\alpha}
\def\ve{\varepsilon}
\def\b{\beta}
\def\D{\Delta}
\def\d{\delta}
\def\f{{\varphi}}
\def\ga{{\gamma}}
\def\L{\Lambda}
\def\lo{{\bf l}}
\def\s{{\bf s}}
\def\A{{\bf A}}
\def\B{{\bf B}}
\def\cB{{\mathcal {B}}}
\def\C{{\mathbb C}}
\def\F{{\bf F}}
\def\G{{\mathfrak {G}}}
\def\g{{\mathfrak {g}}}
\def\aa{{\mathfrak {a}}}
\def\b{{\mathfrak {b}}}
\def\q{{\mathfrak {q}}}
\def\f{{\mathfrak {f}}}
\def\k{{\mathfrak {k}}}
\def\l{{\mathfrak {l}}}
\def\m{{\mathfrak {m}}}
\def\n{{\mathfrak {n}}}
\def\o{{\mathfrak {o}}}
\def\p{{\mathfrak {p}}}
\def\s{{\mathfrak {s}}}
\def\t{{\mathfrak {t}}}
\def\r{{\mathfrak {r}}}
\def\z{{\mathfrak {z}}}
\def\h{{\mathfrak {h}}}
\def\H{{\mathcal {H}}}
\def\O{\Omega}
\def\M{{\mathcal {M}}}
\def\T{{\mathcal {T}}}
\def\N{{\mathcal {N}}}
\def\U{{\mathcal {U}}}
\def\Z{{\mathbb Z}}
\def\P{{\mathcal {P}}}
\def\GVM{ GVM }
\def\iff{ if and only if  }
\def\add{{\rm add}}
\def\deg{{\rm deg}}
\def\Hom{{\rm Hom}}
\def\ld{\ldots}
\def\vd{\vdots}
\def\sl{{\rm sl}}
\def\mod{{\rm mod}}
\def\len{{\rm len}}
\def\cd{\cdot}
\def\dd{\ddots}
\def\q{\quad}
\def\qq{\qquad}
\def\ol{\overline}
\def\tl{\tilde}
\def\nn{\nonumber}
\def\eps{\varepsilon}
\def\beps{\bar\eps}
\def\bdelta{\bar\delta}
\def\blue{\color{black}}
\def\black{\color{black}}
\def\red{\color{black}}
\def\gr{\operatorname{gr}}
\def\Ita{I_\theta(\red \aa^*\black)}

\title {Restriction theorems and root systems for symmetric superspaces}

\author[]{Shifra Reif, Siddhartha Sahi and Vera Serganova}
\date{ \today }

\maketitle

\begin{abstract}
    In this paper we consider those involutions $\theta$ of a finite-dimensional Kac-Moody Lie superalgebra $\g$, with associated decomposition $\g=\k\oplus\p$, for which a Cartan subspace $\aa$ in $\p_{\bar 0}$ is self-centralizing in $\p$. For such $\theta$ the restriction map $C_\theta$ from $\p$ to $\aa$ is injective on the algebra $P(\p)^\k$ of $\k$-invariant polynomials on $\p$. There are five infinite families and five exceptional cases of such involutions, and for each case we explicitly determine the structure of $P(\p)^\k$ by giving a complete set of generators for the image of $C_\theta$. We also determine precisely when the restriction map $R_\theta$ from $P(\g)^\g$ to $P(\p)^\k$ is surjective. Finally we introduce the notion of a generalized restricted root system, and show that in the present setting the $\aa$-roots $\Delta(\aa,\g)$ always form such a system. 
    
\end{abstract}

%%%%*********************************%%%%
\section{Introduction}
%%%%*********************************%%%%

In this paper we consider the analog of the Chevalley restriction theorem for an involution $\theta$ of a finite dimensional Kac-Moody Lie superalgebra $\g=\g_{\bar 0}+\g_{\bar 1}$. That is to say we study the structure of the algebra $P(\p)^\k$ of $\k$-invariant polynomials on $\p$, where $\k$ is the fixed subalgebra of $\theta$ and $\p$ is the $(-1)$-eigenspace, by analyzing its restriction to a Cartan subspace $\aa$ in $\p_{\bar 0}=\p\cap\g_{\bar 0}$. This defines a map $C_\theta$ from $P(\p)^\k$ to the polynomial algebra $P(\aa)$, which is injective precisely when the centralizer of $\aa$ in $\p_1=\p\cap\g_{\bar 1}$ is $0$. This is equivalent to $\g$ admitting an Iwasawa decomposition of the form $\g=\k\oplus\aa\oplus \n$, and so we will say such a $\theta$ is an Iwasawa involution. In this case $P(\p)^\k$ is isomorphic to the image of $C_\theta$. In the rest of the paper we assume that $\theta$ is an Iwasawa involution.

In Theorem A we determine the image of $C_\theta$ for an Iwasawa involution $\theta$. More precisely we show that the image is equal to a certain subalgebra $\Ita$ of $P(\aa)$ defined below. Our result is completely explicit: there are five infinite families of Iwasawa involutions and five exceptional cases, and in each case we provide a complete set of generators for $\Ita$ except for one subfamily (see Section \ref{sec:generators table}).

We consider also the algebra $P(\g)^\g$ of $\g$-invariant polynomials on $\g$ and its restriction to $\p$. This defines a map $R_\theta$ from $P(\g)^\g$ to $P(\p)^\k$  which is in general neither injective  nor surjective. In Theorem B we show that $R_\theta$ is in fact surjective \emph{except} for a subfamily of one infinite family and two of the exceptional cases. Again the results are completely explicit: we determine the image of the composite restriction map $C_\theta \circ R_\theta$ and show that it coincides with $\Ita$ except in these cases.

We also determine the structure of the set $\Delta(\aa,\g)$ of roots of $\aa$ in $\g$. 
Once more we proceed in an explicit manner: we choose a Cartan subalgebra $\h$ of $\g$ which contains $\aa$, and we study the restriction of the $\h$-roots to $\aa$. 
The set $\Delta(\h,\g)$ of $\h$-roots is a generalized root system (GR system)  in the sense of Serganova [Serga2], however this is not always the case for $\Delta(\aa,\g)$. 
Therefore \blue it is natural to \black introduce the notion of a generalized restricted root system (GRR system) by relaxing one of the requirements of a GR system. 
\blue We show that $\Delta(\aa,\g)$ is always a GRR system  in Proposition \ref{GRR prop} (see Section 2.2 for the precise definition).\black

We now describe our results more precisely, starting with the definition of $\Ita$. Let $W$ be the Weyl group of $\Delta_0=\Delta(\aa,\g_{\bar 0})$ and let $P(\aa)^W$ be the algebra of $W$-invariant polynomials on $\aa$. We say that a root $\a\in\Delta(\aa,\g)$ is \emph{singular} if no multiple of $\alpha$ is a root in $\Delta_0$. The root space $\g_\a$ of a singular root is purely odd, and in fact $\g_\a$ has dimension $\left(0| 2k_\a\right)$ for some integer $k_\a$. We define $\Ita$ to be the subalgebra consisting of $W$-invariant polynomials $f \in P(\aa)^W$ which satisfy the following derivative condition for every singular $\a$
\[ \left(D_{h_{\alpha}}\right)^{k}\left(f\right)\in\langle \alpha\rangle, \quad k=1,3,\ldots,2k_\alpha-1  .\]
Here $h_\a$ is the image of $\a$ under the map $\aa^* \to \aa$ induced by an invariant bilinear form, $D_{h_{\alpha}}$ is the partial derivative along $h_\a$, and $\langle \a \rangle$ is the principal ideal of $P(\aa)$ generated by $\a$.

\smallskip

The main results of this paper, which hold in the setting of an Iwasawa involution $\theta$ of a \red  finite-dimensional \black Kac-Moody Lie superalgera $\g$, are as follows.
 
\begin{thmA*} \label{chev rest thm} The restriction map $C_\theta: P(\p)^\k\rightarrow P(\aa)$ is injective and its image is $\Ita$.
\end{thmA*}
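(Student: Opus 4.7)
The injectivity of $C_\theta$ is essentially built into the Iwasawa hypothesis: the vanishing of the centralizer of $\aa$ in $\p_{\bar 1}$ is equivalent to the existence of the Iwasawa decomposition $\g=\k\oplus\aa\oplus\n$, and this forces the $K$-saturation of $\aa$ to be dense in $\p$, so any $\k$-invariant polynomial on $\p$ is determined by its restriction to $\aa$. The substantive content of Theorem A is the identification of the image of $C_\theta$ with $\Ita$, which I would prove by establishing the two inclusions $\operatorname{Im}(C_\theta)\subseteq\Ita$ and $\Ita\subseteq\operatorname{Im}(C_\theta)$ separately.

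For the inclusion $\operatorname{Im}(C_\theta)\subseteq\Ita$, my plan is first to verify $W$-invariance by a standard Chevalley-type argument: realize $W$ as $N/Z$, where $N$ and $Z$ are the normalizer and centralizer of $\aa$ inside a connected algebraic group whose Lie algebra is $\k_{\bar 0}$, and conclude that the restriction to $\aa$ of any $\k_{\bar 0}$-invariant polynomial is $W$-invariant. The singular-root derivative conditions then express the additional content of invariance under the odd part $\k_{\bar 1}$. Given a singular root $\alpha$, the odd root space $\g_\alpha$ has super-dimension $(0|2k_\alpha)$, and I would pair elements of $\g_\alpha$ with elements of $\g_{-\alpha}$ lying in $\k_{\bar 1}\oplus\p_{\bar 1}$ to produce odd vector fields annihilating $F\in P(\p)^\k$. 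Translating this annihilation into a transverse Taylor expansion of $f=C_\theta(F)$ at the hyperplane $\alpha=0$, one should find that $(D_{h_\alpha})^k f$ vanishes modulo $\alpha$ precisely for odd $k$ up to $2k_\alpha-1$, the restriction to odd orders being forced by the purely odd nature of $\g_\alpha$.

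For the reverse inclusion $\Ita\subseteq\operatorname{Im}(C_\theta)$, my plan is to proceed by the case-by-case analysis suggested by the classification: for each of the five infinite families and five exceptional Iwasawa involutions, I would produce an explicit generating set of $\Ita$ (matching the list of Section \ref{sec:generators table}) and lift each generator to a $\k$-invariant polynomial on $\p$. In the many cases where $R_\theta$ is surjective (Theorem B), this is automatic: a generator of $\Ita$ can be pulled back along $C_\theta\circ R_\theta$ to a $\g$-invariant polynomial on $\g$. In the exceptional subfamily and the two exceptional cases where $R_\theta$ fails to be surjective, I would construct the missing $\k$-invariants directly, typically as Pfaffian-type or super-determinantal expressions in appropriate matrix coordinates, and verify explicitly that their restrictions to $\aa$ realize the required generator of $\Ita$.

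The main obstacle is the reverse inclusion in precisely the cases where $R_\theta$ is not surjective: here the existence of a $\k$-invariant lift cannot be deduced from classical $\g$-invariant theory and must be exhibited by hand. A secondary subtlety is to ensure that the singular-root conditions (odd orders only, vanishing modulo $\alpha$, up to $2k_\alpha-1$) are exactly the defining relations of $\operatorname{Im}(C_\theta)$ and not merely necessary ones; this sharpness is what allows the two inclusions to meet in the equality $\operatorname{Im}(C_\theta)=\Ita$.
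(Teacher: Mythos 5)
Your injectivity plan invokes density of the ``$K$-saturation'' of $\aa$ in $\p$, which is a statement about group orbits that does not make literal sense in the super setting: $\p_{\bar 1}$ has no points, and there is no geometric orbit argument available. The paper instead proves injectivity algebraically (Lemma~\ref{lem-inj}): one filters $\mathbb C[\p]$ by $\xi$-degree, checks that for generic $h\in\aa$ the odd derivations $\{X^-(h):X\in\k_{\bar1}\}$ span all $\partial/\partial\xi_i$ (using $[\mathrm{ad}_h,\k_{\bar1}]=\p_{\bar1}$, which is the precise consequence of the Iwasawa hypothesis), and thereby kills the lowest nonzero $\xi$-degree term of a putative kernel element; this reduces to the classical injectivity $\mathbb C[\p_{\bar0}]^{\k_{\bar0}}\hookrightarrow\mathbb C[\aa]$. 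Your intuition is right but the mechanism you name would not compile; some version of this filtered derivation argument is required.

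For the forward inclusion $\operatorname{Im}(C_\theta)\subseteq\Ita$ your plan is the paper's plan: $W$-invariance is classical Chevalley for $(\g_{\bar0},\k_{\bar0})$, and the derivative conditions come from $\k_{\bar1}$-invariance applied along $\g_\alpha\oplus\g_{-\alpha}$. The one substantive ingredient you omit is the role of the twisted form $B_\theta(x,y)=(x,\theta y)$: on a singular root space $\g_\alpha$ it is a nondegenerate skew form (Lemma~\ref{skew sym form}), and this symplectic structure is what produces the adapted basis $e_1,\dots,e_n,e_1',\dots,e_n'$ of $\g_\alpha$, the normalization $[e_i+\theta e_i,e_j'-\theta e_j']=\delta_{ij}h_\alpha$, and from it the key identity $(e_i+\theta e_i).f=\alpha\,\partial f/\partial\xi_i-\eta_i D_{h_\alpha}f$. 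The recursion $\alpha f_k=D_{h_\alpha}f_{k-1}$ (valid only for $k\le n$) is what forces the odd orders $1,3,\dots,2n-1$; without isolating this structure, the claim that ``odd orders only'' is forced by the purely odd nature of $\g_\alpha$ is not a proof.

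The real divergence is in the reverse inclusion $\Ita\subseteq\operatorname{Im}(C_\theta)$. You propose, uniformly, to produce explicit generators for $\Ita$ and lift each one, constructing Pfaffian-type or superdeterminantal $\k$-invariants by hand in the three non-surjective cases. This will not go through in at least one of them: in Case~\ref{osp_plus_osp} with $a=0$ (i.e.\ $\g=\mathfrak{osp}(2m|4n+2b)$, $\k=\mathfrak{osp}(m|2n)\oplus\mathfrak{osp}(m|2n+2b)$) the paper states explicitly that the generators of $I_2(\aa^*)$ as an $I_1(\aa^*)$-module are not known, so ``list generators and lift them'' has no starting point. The paper instead proves surjectivity there (and in the two exceptional cases $F_4/\mathfrak{gosp}(2|4)$ and $D(2,1,a)/\mathfrak{osp}(2|2)\oplus\mathfrak{so}(2)$) by a nonconstructive dimension count: a combinatorial upper bound on $\dim I(\aa^*)_d$ by leading-monomial analysis (regular partitions in the $(m|2n)$-hook), matched against a lower bound on $\dim S(\p^*)^\k_d$ coming from Howe duality, $\mathfrak{osp}$ branching, and Schur's lemma applied to $S(V)\otimes S(V)^*$. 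This is a genuinely different method, and it is chosen precisely because the constructive route is blocked. If you want to keep a constructive lift, you would first need to solve the open generator problem for $I_2(\aa^*)$; otherwise you should switch to the dimension-matching strategy in those cases.
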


\begin{thmB*}

\label{thm:surjection of P(g)^g}
     The restriction map $R_\theta : P(\g)^\g\rightarrow P(\p)^\k$ is surjective unless $(\g,\k)$ is one of the pairs $\left(\mathfrak{osp}(2m|2n_1+2n_2),\mathfrak{osp}(m|2n_1)\oplus\mathfrak{osp}(m|2n_2)\right)$, $(F_4,\mathfrak{gosp}(2|4))$ or $(D(2,1,a),\mathfrak{osp}(2|2)\oplus \mathfrak{so}(2))$. 
\end{thmB*}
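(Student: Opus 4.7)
By Theorem A, the map $C_\theta$ identifies $P(\p)^\k$ with $\Ita \subset P(\aa)^W$, so $R_\theta$ is surjective if and only if the composite
\[
\Phi := C_\theta \circ R_\theta \;:\; P(\g)^\g \;\longrightarrow\; \Ita
\]
is surjective. Choose a Cartan subalgebra $\h$ of $\g$ containing $\aa$. Then $\Phi(f)$ equals the restriction to $\aa$ of $f|_\h$, so $\Phi$ factors through the Harish-Chandra image $\mathrm{HC}_\g := \{f|_\h : f \in P(\g)^\g\} \subset P(\h)^{W(\g,\h)}$. For the finite-dimensional Kac-Moody superalgebras appearing in our list, $\mathrm{HC}_\g$ is explicitly known: in the classical matrix cases it is the algebra of supersymmetric polynomials on $\h$, and in the exceptional cases it is a finitely and explicitly presented subalgebra generated by a small set of Casimir elements of known degrees.

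The plan is to proceed case-by-case through the five infinite families and five exceptional Iwasawa involutions. For each case we have an explicit generating set of $\Ita$ from Section \ref{sec:generators table}, and surjectivity reduces to producing, for each generator $g$, a $\g$-invariant $F \in P(\g)^\g$ with $\Phi(F) = g$. In the matrix superalgebra cases the natural candidates are the supertrace power sums $p_k(X) = \mathrm{str}(X^k)$, together with Berezinian and Pfaffian-type invariants in the orthosymplectic setting. After diagonalizing along $\aa$, their restrictions become power sums and elementary supersymmetric functions in the $\aa$-coordinates, and a direct check shows these generate $\Ita$ in all but the three listed cases. For the exceptional $\g$, the algebra $P(\g)^\g$ has a short explicit list of Casimir generators, reducing the verification to a finite comparison of degrees and $\aa$-weights.

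The main obstacle is proving the \emph{failure} of surjectivity for the three exceptional pairs. For $(\mathfrak{osp}(2m|2n_1+2n_2),\mathfrak{osp}(m|2n_1)\oplus\mathfrak{osp}(m|2n_2))$ a Pfaffian-type generator of $\Ita$ that mixes the two $\mathfrak{osp}$-blocks is not contained in $\mathrm{HC}_\g|_\aa$; the idea is to compare the symmetry under which $\mathrm{HC}_\g|_\aa$ is automatically invariant (coming from the full Weyl group of $\g$) with the strictly smaller Weyl group $W(\Delta(\aa,\g))$ preserving $\Ita$, and to exhibit the missing generator directly via the explicit generating set from Section \ref{sec:generators table}. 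For $(F_4,\mathfrak{gosp}(2|4))$ and $(D(2,1,a),\mathfrak{osp}(2|2)\oplus\mathfrak{so}(2))$, the Casimir generators of $P(\g)^\g$ have fixed small degrees, so a degree/weight count against the listed generators of $\Ita$ forces at least one low-degree generator of $\Ita$ to lie outside the image of $\Phi$. In each of these three cases the proof is completed by exhibiting the obstruction explicitly and by verifying, through the explicit description of $\mathrm{HC}_\g$, that it cannot be cancelled by any other invariant.
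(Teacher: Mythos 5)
Your overall strategy matches the paper's: both factor $R_\theta$ through Sergeev's Harish--Chandra isomorphism $C:P(\g)^\g\to I(\h^*)$, use Theorem A to identify $P(\p)^\k$ with $\Ita$, and reduce surjectivity of $R_\theta$ to surjectivity of the restriction $R:I(\h^*)\to \Ita$, which is then checked case-by-case against the explicit generator lists; the failure cases are then detected by a degree/dimension comparison exactly as you suggest for the two exceptional pairs. However, the specific obstruction you propose for the family $(\mathfrak{osp}(2m|2n_1+2n_2),\mathfrak{osp}(m|2n_1)\oplus\mathfrak{osp}(m|2n_2))$ does not work. You suggest comparing the Weyl group $W(\Delta(\aa,\g))$ under which $\Ita$ is invariant with a ``strictly larger'' symmetry inherited from the full Weyl group of $\g$ acting on $\mathrm{HC}_\g|_\aa$. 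But only elements of $W(\g,\h)$ that stabilize $\aa$ induce symmetries of $\aa^*$, and for an Iwasawa involution these generate exactly the baby Weyl group $W$; so both $\Ita$ and $R(I(\h^*))$ sit inside the same $W$-invariants, and no invariance mismatch arises. The genuine obstruction in this case is a degree computation resting on Sergeev's structural description of $I(\h^*)$: any element of $I(\h^*)$ in which some variable appears to odd degree must be divisible by the full product $\prod_{i,j}(\beps_i-\bdelta_j)$ over all $2m$ even and $4n+2b$ odd coordinates. Upon restriction to $\aa$, this forces the minimal degree of an element with odd $\eps$-degree to be $m(4n+2b+1)$, whereas $\Ita$ already contains such an element (essentially $(\eps_1\cdots\eps_m)^{2b+1}$ times an $I_1$-factor) in degree $m(2n+2b+1)$. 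So your plan would need to replace the Weyl-group comparison with this divisibility/degree argument; the rest of the outline, including the dimension counts $\dim I(\aa^*)_4>\dim I(\h^*)_4$ for $(F_4,\mathfrak{gosp}(2|4))$ and $\dim I(\aa^*)_6>\dim I(\h^*)_6$ for $(D(2,1,a),\mathfrak{osp}(2|2)\oplus\mathfrak{so}(2))$, is the right idea and is what the paper in fact does.
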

%    \begin{thmC*}
  
 %      The restricted roots $\Delta(\aa,\g)$ form a GRR system.      
  %  \end{thmC*}

%    We refer the reader to Section 2.2 for the precise definition of a GRR system.
    \smallskip
    
    Theorem A was previously obtained in \cite{AHZ} in a slightly different setup, with a completely different proof. Our argument is very explicit: we directly verify that Theorem A holds for every Iwasawa involution. This approach allows us to prove Theorems B and reveals a great deal of information about Iwasawa involutions, generalized restricted root systems, and root multiplicities.
    We hope that this information will be useful for researchers in the general theory of Lie superalgebras, as well as those interested in the applications of Lie superalgebras. We briefly describe one specific application that we have in mind.
     
    In \cite{SV} Sergeev and Veselov have developed the theory of a deformed GR system, which is a GR system together with a bilinear form and a $W$-invariant multiplicity function on the roots satisfying certain ``admissibility" constraints. They have used this theory to construct families of completely integrable systems, which are supersymmetric analogs of the classical Calogero-Moser-Sutherland models of mathematical physics. It is natural to ask whether there exists a similar deformation theory for a GRR system, which might lead to new integrable systems. In a sequel to this paper we intend to study this question in conjunction with the classification problem for GRR systems, and we expect that the results of the present paper will play a key role in these investigations.

    The paper is organized as follows. In Section 2 we recall some generalities on roots and restricted roots for a Lie superalgebra, \blue  give the definition of a GRR system and in Proposition \ref{GRR prop} prove that $\Delta(\aa,\g)$ is GRR system. We also compare GRR systems to a similar concept (RGRS) introduced in \cite{Sh2}. \black In Sections 3 and 4, we show that $C_\theta$ is injective and that its image is contained in $\Ita$. Finally in Section 5, we show by explicit computation that the image of $C_\theta$ is precisely $\Ita$. These computations also let us prove \red  Theorems A and B. \black  Our analysis reveals a fair amount of detailed information about the sets of restricted roots, and we organize this in the appendix for future reference.

\subsection*{Acknowledgements}
    The project was made possible by a SQuaRE at the American Institute for Mathematics. The authors thank AIM for providing a supportive and mathematically rich environment."
    This project is partially supported by Israel Science Foundations Grant 1957/21.
    S. Reif gratefully acknowledges support from the Institute for Advanced Study while working on this project.
    The research of S. Sahi was partially supported by NSF grants DMS-1939600 and 2001537, and Simons Foundation grant 509766.
    Vera Serganova was supported in part
by NSF grant 2001191 and by Tromso research foundation.

%%%%*********************************%%%%
\section{Supersymmetric spaces}
%%%%*********************************%%%%
\red We assume in this paper that the base field is $\mathbb C$.\black
\subsection{Restricted Root Systems}\label{restricted root systems prelim}
Let $\g=\g_{\bar 0}\oplus\g_{\bar 1}$ be a \red  finite-dimensional \black  Kac-Moody Lie superalgebra with a non-degenerate even bilinear form $(\cdot,\cdot)$, namely $\g$ is one of $\mathfrak{gl}(m|n)$, \blue $\mathfrak{sl}(m|n)_{m\ne n}$, $\mathfrak{psl}(n|n)$,  \black $\mathfrak{osp}(m|2n)$, $D(2,1,a)$, $F_4$ and $G_3$. Let $\theta$ be an involution on $\g$ which fixes $(\cdot,\cdot)$. 
Then $\g=\k\oplus\p$ where $\k$ is the subalgebra of $\g$ of $\theta$-fixed points and $\p:=\{x\in\g:\theta(x)=-x\}$. 
Let $\aa\subseteq \p_{\bar 0}$ be the Cartan subspace of the symmetric pair $(\g_{\bar 0},\k_{\bar 0})$, \red that is $\aa$ is a commutative and $\p_{\bar 0}^\aa=\aa$. \black
We assume that there exists an Iwasawa decomposition  $\g=\k\oplus\aa\oplus\n$ where $\n:=\operatorname{span}_{\mathbb C}\{x\in \g: [a,x]=c_a x, c_a\in\mathbb R_{>0}\} $ for some regular element $a\in\aa$.  
This is equivalent to $\p^\aa=\aa$. 
Let  $\h$ be a Cartan subalgebra of $\g$ containing $\aa$ and let $\t:=\h\cap\k$.

\red Note that $\h$ is $\theta$-stable. Indeed, let $h\in\h$ and write $h=h_k+h_p$ where $h_k\in\k$ and $h_p\in\p$. Then for any $a\in\aa$, $0=[a,h]=[a,h_k]+[a,h_p]$. Since $[a,h_k]\in\p$ and $[a,h_p]\in\k$, $[a,h_k]=[a,h_p]=0$. Hence $h_p\in\aa\subseteq\h$ and so $h_k\in\h$ and $\theta(h)=h_k-h_p\in\h$. \black
We denote by $p_\aa:\h\rightarrow\aa$ the projection with kernel $\t$. Since $\theta$ preserves $(\cdot,\cdot)$, it restricts to a nondegenerate invariant form on $\aa$ and $\t$.

We denote by $\Delta(\g,\h)$ the set of roots of $\g$ and define the set of restricted roots to be
$\Delta(\red \aa,\g\black ):=\{\bar\alpha\mid_\aa\ : \bar\alpha\in\Delta(\g,\h)\}\subset \aa^*$. 
For a restricted root $\alpha$, the root space is defined by $\g_\alpha:=\{x\in\g : [h,x]=\alpha(h)x,\ \forall h\in\aa\}$ and $\dim \g_\a$ is called the \emph{multiplicity} of $\a$.
The root space $\g_\a$ need not be purely even or odd space. 
A root $\a$ is called \emph{singular} if $\g_{c\a}\cap \g_{\bar 0}=\{0\}$ \red  for any \black  $c\in\mathbb C$.
Note that in $\Delta(\g,\h)$ \red a root does not admit a multiple in $\Delta_0$ \black if and only if it is isotropic. Hence all singular roots are \red  restrictions \black  of isotropic roots.

Since the restriction of $(\cdot,\cdot)$ to $\aa$ is nondengenerate, 
we have a form on $\aa^*$, \red which we again define denote by $(\cdot,\cdot)$. \black We define the baby Weyl group to be $W=\langle s_\alpha \mid \alpha\in\Delta(\red \aa,\g\black )_{\bar 0}\rangle$ where $\Delta(\red \aa,\g\black )_{\bar 0}$ is the set of roots which are restrictions of even roots, and $s_\alpha$ is the reflection corresponding to $\alpha$.
Choose  a nonzero vector $h_\alpha$  in $(\ker\alpha)^\perp\subset \aa$. Note that $(\ker\alpha)^\perp$ is one dimensional so $h_\alpha$ is unique up to a scalar.
\begin{remark}
The projection $p_\aa([\g_\alpha,\g_{-\alpha}])$ is orthogonal to $\ker\a$. Indeed, letting $x\in\g_\alpha$ and $y\in\g_{-\alpha}$, since $(\aa,\t)=0$, we have 
\[(p_\aa([x,y]),a)=([x,y],a)=(x,[y,a])=\alpha(a)(x,y)=0\]
for all $a\in \ker\alpha$.
\end{remark}

Let $B_\theta(x,y)=(x,\theta y)$ be a twisted form defined on $\g$. Then $B_\theta$ is a supersymmetric even non-degenerate bilinear form, see \cite[Prop. 2.10]{AHZ}. Note that for every root space $\theta\g_\a=\g_{-\a}$ because $[h,\theta x]=-\theta([h,x])=-\alpha(h)\theta x$ for any $h\in\aa,x\in \g_\a$.  Hence $B_\theta$ restricts to a non-degenerate supersymmetric form on $\g_\a$.
\red
\begin{lemma}\black
\label{skew sym form}

    If $\alpha$ is a singular root, $B_\theta$ is a \red  non-degenerate \black  skew-symmetric bilinear form on $\g_\alpha$. 
    In particular $\dim \g_\a$ is even. Moreover, if $B_\theta(x,y)\ne 0$ \red  for $x,y\in\g_\a$   then $p_\aa[x,\theta y]$ is a nonzero multiple of $h_\a$.\black
    \red 
\end{lemma}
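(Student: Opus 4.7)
The plan is to assemble the three assertions from inputs already available in the preceding text: (i) the singular hypothesis forces $\g_\alpha\subseteq\g_{\bar 1}$; (ii) $B_\theta$ is supersymmetric and non-degenerate on $\g_\alpha$, as noted in the paragraph above the lemma; and (iii) $(\ker\alpha)^\perp$ is one-dimensional, spanned by $h_\alpha$.

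For the skew-symmetry and parity statement, the definition of ``singular'' applied with $c=1$ gives $\g_\alpha\cap\g_{\bar 0}=\{0\}$, so $\g_\alpha$ is purely odd. The restriction of a supersymmetric even form to a purely odd subspace is skew-symmetric, and combined with the non-degeneracy already noted this makes $B_\theta|_{\g_\alpha}$ a non-degenerate symplectic form, forcing $\dim\g_\alpha$ to be even.

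For the final assertion, fix $x,y\in\g_\alpha$ with $B_\theta(x,y)\neq 0$. Since $\theta y\in\g_{-\alpha}$, one has $[x,\theta y]\in\g_0=\h$, so $p_\aa[x,\theta y]$ is a well-defined element of $\aa$. By the preceding Remark this element lies in $(\ker\alpha)^\perp=\mathbb{C}\, h_\alpha$, so it suffices to show it is nonzero. I would pair against an arbitrary $h\in\aa$ and use, in order, $(\aa,\t)=0$, invariance of $(\cdot,\cdot)$, and $[\theta y,h]=\alpha(h)\,\theta y$:
\[
(p_\aa[x,\theta y],h) \;=\; ([x,\theta y],h) \;=\; (x,[\theta y,h]) \;=\; \alpha(h)\,B_\theta(x,y).
\]
Since $\alpha$ is a nonzero element of $\aa^*$ and $B_\theta(x,y)\neq 0$, the right-hand side is a nonzero linear functional of $h$, so $p_\aa[x,\theta y]\neq 0$ by non-degeneracy of $(\cdot,\cdot)|_\aa$. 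Hence $p_\aa[x,\theta y]$ is a nonzero scalar multiple of $h_\alpha$.

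The argument is essentially a bookkeeping exercise once the preparatory material is in place; the only point requiring mild care is verifying the sign in $[\theta y,h]=\alpha(h)\theta y$, which comes from super-antisymmetry applied to the even element $h$ and the odd element $\theta y$ together with $\theta y\in\g_{-\alpha}$.
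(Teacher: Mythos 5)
Your proof is correct and follows essentially the same approach as the paper's: deduce skew-symmetry from the purely odd nature of $\g_\alpha$, invoke the non-degeneracy established just above the lemma, and show that pairing $p_\aa[x,\theta y]$ against $h\in\aa$ produces a nonzero multiple of $\alpha(h)$. The only cosmetic difference is that you apply invariance of $(\cdot,\cdot)$ directly to $([x,\theta y],h)=(x,[\theta y,h])$, whereas the paper first applies $\theta$ to both slots before using invariance; both computations reach the same conclusion, up to an inessential sign.

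One small point worth flagging: you write $[x,\theta y]\in\g_0=\h$, but in general the centralizer of $\aa$ is $\m\oplus\aa$ with $\m:=\k^\aa$ possibly strictly larger than $\t$. What actually matters, and what both you and the paper use, is that $p_\aa$ extends to the centralizer with kernel $\m$, and that $\m\perp\aa$ (since $\k\perp\p$), so $(p_\aa z,h)=(z,h)$ for any $z$ in the centralizer and $h\in\aa$. The identification of $\g_0$ with $\h$ is not needed and not quite right; it is enough to know $\m\perp\aa$.
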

\red
    \begin{proof}
    If $\a$ is singular then $\g_\a$ is purely odd and so $B_\theta$ is skew-symmetric.
The subspaces $\aa$ and $\mathfrak t$ are orthogonal complements with respect to $(\cdot,\cdot)$ since  $(\cdot,\cdot)$ is $\theta$ stable. Hence $(p_\aa[x,\theta y], a)=([x,\theta y], a)$ for any $a\in \aa$.    
Moreover, \black
$$([x,\theta y], a)=-([\theta x,y],-a)=(\theta x,[y,a])=-\alpha(a)(x,\theta y)=-\alpha(a) B_\theta(x,y).$$
Since $B_\theta(x,y)\ne 0$, we get that $p_\aa[x,\theta y]$ is a nonzero multiple of $h_\a$.
     
    \end{proof}
\black

\subsection{The Symmetric Algebra and the Algebra of Invariants}
For a super-vector space $V=V_{\bar 0}\oplus V_{\bar 1}$, \red  we denote by 
$T(V)$ the tensor algebra, namely $T(V)=\bigoplus_{d=0}^\infty V^{\otimes d}$. Similarly, \black  we denote by $S(V)$ the symmetric algebra on $V$, that is $S=S(V_{\bar 0})\otimes \bigwedge(V_{\bar 1})$ as vector spaces. 
Note that $S(V)=\bigoplus_{d=0}^\infty S^d(V)$ where $S^d(V)$ is the component of degree $d$. 

Let $m=\dim V_{\bar 0}$ and $n=\dim V_{\bar 1}$. Denote by $P_d$ be the set of partitions $\lambda$ of $d$ that fit in the $(m|n)$-hook, namely $\lambda_{m+1}\le n$. 
Recall that $\red  V^{\otimes d}\black =\bigoplus_{\lambda\in P_d} V_\lambda$ as a $\mathfrak{gl}(V)$-module where  $V_\lambda$ is the irreducible module with highest weight 
$$\lambda_1\eps_1+\ldots+\lambda_m\eps_m+\max(\lambda'_1-m,0)\delta_1+\ldots+\max(\lambda'_n-m,0)\delta_n.$$

Note that for any superspace $V$, the polynomial algebra $P(V)$ is naturally isomorphic to the symmetric algebra $S(V^*)$. Given a subspace $W\subset V$, the restriction map from $S(V^*)$ to $S(W^*)$ is well defined.

    Given an Iwasawa involution $\theta$, the algebra  of invariants is the following subalgebra of $S(\aa^*)$:
    $$\Ita=\left\{f\in S(\aa^*)^W \mid \left(D_{h_{\alpha}}\right)^{k}\left(f\right)\in\left\langle \alpha\right\rangle \text{ for } \a \text{ singular }, k=1,3,\ldots,\dim\g_\a-1  \right\}.$$
    We shall abbreviate $I(\aa^*)=\Ita$ in the rest of the paper.

\begin{example} \label{diagonal case}
    Let $\tilde \g:=\g\times\g$ and let $\tilde\theta:\tilde\g\rightarrow\tilde\g$ be such that $\tilde\theta(x,y)=(y,x)$. 
    Then we get a symmetric pair $(\tilde\g,\tilde\k)$ where $\tilde\k=\{(x,x):x\in\g\}\cong\g$. Here $\tilde\p=\{(x,-x):x\in\g\}$
    and the Cartan subspace  $\tilde\aa=\{(h,-h):h\in\h\}$ is identified with the Cartan subalgebra $\h$ of $\g$. 
    In this case the restricted roots are the roots of $\g$, the dimension of each root space is $2$ and each root space is either purely even or purely odd. \red A root of $\g$ with an odd root space has multiple which is an even root if and only if it is nonisotropic. \black Hence the algebra of invariants is $$I(\tilde \aa^*)\cong I(\h^*):=\left\{ f\in S(\aa^*)^W : D_{h_\a}(f)\in\langle\a\rangle \text{ for $\a$ isotropic root}\right\}.$$
    By \cite{Serge2}, this is precisely the image of the restriction map $S(\g^*)^\g$ to $S(\h^*)$.
\end{example}

\begin{remark} \label{surjectivity via gr}
    The algebra $S(\mathfrak g)$ is in fact  $\gr U(\g)$, namely the associated graded of the universal enveloping algebra of $\g$. 
    Since the adjoint action of $\g$ gives an isomorphic $\g$-module structures on  $U(\g)$ and $S(\g)$, we also get that $\gr Z(\g)=S(\g)^\g$. 
    Let $D(\g,\k):= U(\g)^\k / \red(\k U(\g)\cap U(\g)^\k)\black$. 
    Since $\g=\k\oplus \p$ as a $\k$-module, we also get that $\gr D(\g,\k)=S(\p)^\k\cong S(\g)^\k/ (\k S(\g)\cap S(\g)^\k)$. The functor $\gr$ sends the projection map $Z(\g)\rightarrow D(\g,\k)$ to the projection $S(\g)^\g\rightarrow S(\p)^\k$ and reflects surjectivity, that is the former map is surjective if the latter map is surjective. Using the invariant bilinear form, we note that the latter map is equivalent to the restriction map $S(\g^*)^\g\rightarrow S(\p^*)^\k$.
\end{remark}

\subsection{Generalized Restricted Root Systems}

We show that a restricted root system is a generalized root system in the following sense.
\begin{definition}\label{def:grrs}
Let $V$ be a finite-dimensional vector space with a non-degenerate bilinear form $\langle\cdot,\cdot\rangle$. A finite set $R=R_{sing}\cup R_{reg}\subset V\backslash \{0\}$ is called a GRR system if 
\begin{enumerate}
    \item $R$ spans $V$ and $R_{reg}=-R_{reg}$, $R_{sing}=-R_{sing}$.
    \item If $\a\in R_{reg}$ then $\langle \a,\a\rangle \ne 0$. 
For every $\beta\in R$, $\frac{2\langle\a,\beta\rangle}{\langle \a,\a\rangle}\in\mathbb Z$ and $s_\a(\beta)=\beta-\frac{2\langle\a,\beta\rangle}{\langle \a,\a\rangle}\a\in R$. 
Moreover, $s_\alpha(R_{reg})=R_{reg}$ and $s_\a(R_{sing})=R_{sing}$.
    \item If $\a\in R_{sing}$ then for any $\beta\in R$ such that $\langle \a,\beta\rangle\ne0$, at least one of the vectors $\beta+\a,\beta-\a$ belongs to $R$. 
    Moreover, if $\alpha,\beta\in R_{sing}$ and $\alpha+\beta\in R$ then $\alpha+\beta\in R_{reg}$.
\end{enumerate}
\end{definition}

This generalizes the definition of \cite{Serga2}, where in the third condition it is required that exactly one of $\a+\beta,\a-\beta$ belongs to $R$. The notion of GRR system also includes the root system $BC(m,n)$ given in  \cite[Sec. 2]{SV}. Note that $R_{reg}$ is a nonreduced root system.

We now prove that  $R=\Delta(\red \aa,\g\black )$ is a GRR system for every Iwasawa decomposition. 
Here $R_{sing}$ are the singular roots and the inner product is given by the inner product of $\aa$. 
\blue
  \begin{proposition} \label{GRR prop}

      The restricted roots $\Delta(\aa,\g)$ form a GRR system.      
  \end{proposition}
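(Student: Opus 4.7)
The plan is to verify the three axioms of a GRR system for $R = \Delta(\aa,\g)$, with $R_{sing}$ the singular restricted roots. Axiom (1) is essentially immediate: $R$ spans $\aa^*$ since any $a \in \aa$ with $\beta(a) = 0$ for all $\beta \in R$ commutes with every root space and with its own centralizer in $\g$, hence is central and so zero; the symmetry $R = -R$ respecting the sing/reg split follows from $\theta(\g_\alpha) = \g_{-\alpha}$ and the preservation of parity by $\theta$.

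For axiom (2), if $\alpha \in R_{reg}$ then some nonzero multiple lies in $\Delta_0$, which is a crystallographic root system, so $(\alpha,\alpha) \neq 0$; moreover $s_\alpha \in W$, and since $W$ is realized by $\g$-automorphisms (from the normalizer of $\aa$ in the adjoint group of $\g_{\bar 0}$) preserving parity, it permutes $\Delta(\aa,\g)$ and respects the singular/regular classification. Integrality of $2(\alpha,\beta)/(\alpha,\alpha)$ follows by lifting to $\h^*$ via the $\theta$-induced orthogonal decomposition $\h = \aa \oplus \t$ and invoking that $\Delta(\g,\h)$ has integer reflection coefficients, cf. \cite{Serga2}.

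The substantive part is axiom (3). Let $\alpha \in R_{sing}$ and $\beta \in R$ with $(\alpha,\beta) \neq 0$. Assume for contradiction that $[\g_\alpha, \g_{\pm\beta}] = 0$, i.e., neither $\alpha + \beta$ nor $\alpha - \beta$ is in $R$; applying $\theta$ gives $[\g_{-\alpha}, \g_{\pm\beta}] = 0$, and then the Jacobi identity yields $[[\g_\alpha, \g_{-\alpha}], \g_\beta] = 0$. By Lemma \ref{skew sym form} the nondegeneracy of $B_\theta$ on the purely odd $\g_\alpha$ supplies $x, y \in \g_\alpha$ with $z := [x, \theta y]$ satisfying $p_\aa(z) = c\,h_\alpha$ for some $c \neq 0$. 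The $\aa$-component of $z$ acts on any weight vector $v \in \g_\beta$ by the nonzero scalar $c\,\beta(h_\alpha) = c(\alpha,\beta)$, and a weight-space analysis (decomposing $z$ and $v$ along $\t$-weights inside the $\aa$-centralizer) shows this contribution cannot be canceled by the remaining components of $z$, producing $[z, v] \neq 0$ -- the desired contradiction. For the moreover clause, if $\alpha, \beta$ are both singular and $\alpha + \beta \in R$, then $\g_\alpha, \g_\beta$ are purely odd, so any $[x, y]$ with $x \in \g_\alpha, y \in \g_\beta$ lies in $(\g_{\alpha + \beta})_{\bar 0}$; a variant of the above bracket argument produces such a nonzero element, whence $\alpha + \beta \in \Delta_0 \subseteq R_{reg}$.

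The main obstacle will be the weight-space analysis in axiom (3): the element $z \in [\g_\alpha, \g_{-\alpha}]$ may have components outside $\aa$ (lying in the centralizer of $\aa$ in $\k$) whose action on $\g_\beta$ could a priori offset the $h_\alpha$-contribution. Ensuring no cancellation will require either exploiting the symplectic freedom in choosing $x, y \in \g_\alpha$ via the nondegeneracy of $B_\theta|_{\g_\alpha}$, or alternatively lifting the whole argument to $\h$ and invoking the full GR property of $\Delta(\g, \h)$ for lifts $\bar\alpha, \bar\beta$ with $(\bar\alpha, \bar\beta)_\h \neq 0$.
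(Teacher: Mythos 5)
Your overall plan — verifying the three axioms using the twisted form $B_\theta$ and the skew-symmetry lemma — is the right one and matches the paper's approach, but two steps contain genuine gaps.

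For axiom (2), your proposed route to integrality of $\frac{2(\alpha,\beta)}{(\alpha,\alpha)}$ — ``lifting to $\h^*$ via $\h = \aa \oplus \t$ and invoking that $\Delta(\g,\h)$ has integer reflection coefficients'' — does not go through. Even though $\aa^* \hookrightarrow \h^*$ isometrically, the restricted root $\alpha$ is generally \emph{not} an element of $\Delta(\g,\h)$; only its lifts $\bar\alpha$ are, and the quantity $2(\bar\alpha,\bar\beta)/(\bar\alpha,\bar\alpha)$ does not control $2(\alpha,\beta)/(\alpha,\alpha)$ since $(\bar\alpha,\bar\alpha)\ne(\alpha,\alpha)$ in general. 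The paper instead runs the classical $\mathfrak{sl}_2$ argument on the restricted side: it produces an even $x\in\g_\alpha\cap\g_{\bar 0}$ with $(x,\theta x)\ne 0$, forms $\mathfrak{s}_\alpha = \operatorname{span}\{x,\theta x,[x,\theta x]\}\cong\mathfrak{sl}_2$, and then reads off the integrality and string-closure properties from finite-dimensional $\mathfrak{sl}_2$-representation theory applied to $\bigoplus_r \g_{r\alpha}$. You need this kind of argument; the lifting shortcut is not available.

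For axiom (3), you have correctly isolated the obstruction — the element $z=[x,\theta y]$ has a component $M$ in $\m=\k^\aa$ beyond $p_\aa(z)=c\,h_\alpha$, and $[M,v]$ could in principle cancel $c\beta(h_\alpha)v$ — but you stop short of resolving it, offering only alternatives (``symplectic freedom'' or ``lift to $\h$'') without carrying either out. The paper's resolution is a single clean trick that you did not find: instead of $z=[x,\theta y]$ one works with $z' := [x,\theta y] - \theta([x,\theta y]) = [x,\theta y]-[\theta x,y]$. Since $\theta$ acts by $-1$ on $\aa$ and $+1$ on $\m$, the $\m$-components cancel and $z' = 2p_\aa([x,\theta y])=2H_\alpha$ lies purely in $\aa$. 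On the other hand, the hypothesis $[x,v]=[y,v]=[\theta x,v]=[\theta y,v]=0$ together with the super-Jacobi identity gives $[[x,\theta y],v]=0$ \emph{and} $[[\theta x,y],v]=0$, hence $[z',v]=0$. So $\beta(H_\alpha)v=0$ with $v\ne 0$, forcing $\beta(H_\alpha)=0$ and hence $(\alpha,\beta)=0$, the desired contradiction. Your observation that applying $\theta$ gives $[\g_{-\alpha},\g_{\pm\beta}]=0$ is exactly the ingredient needed, but the crucial move is to antisymmetrize by $\theta$ to land in $\aa$ — that is the idea missing from your proposal.
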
 \black 

\begin{proof}
    The property (1) follows from the same property of $\Delta(\g,\h)$. 
    \red 
    Let $\a'\in R_{\red reg}$. 
   Then $\a'=k\alpha$ where $\g_{\a}\cap\g_{\bar 0}$ is nonzero for some $k\in\mathbb C$. 
    From the theory of Lie algebras $( \a,\a)\ne 0$ and so $( \a',\a')\ne 0$. 
    \black 
    Moreover, there exists $x\in \g_{\a}\cap\g_{\bar 0}$ such that $(x,\theta x)\ne 0$. 
    Indeed, otherwise for every $y,z\in \g_{\a}\cap\g_{\bar 0}$, one has $(y,\theta z)=\frac{1}{4}\left( (y+z,\theta y+\theta z)-(y-z,\theta y-\theta z)\right)=0$, which contradicts the fact that $(\cdot,\cdot)$ is non degenerate. 
    Now, since $(x,\theta x)\ne 0$, the subalgebra $\mathfrak{s}_\a=\operatorname{span}_{\mathbb R}\{x,\theta x,[x,\theta x]\}$ is isomorphic to $\mathfrak{sl}_2(\mathbb R)$ so (2) follows for $\a$.
    We are left to prove (2) for \red $\a'=k\a$\black . 
    Since $\bigoplus_{r\in\mathbb C}\g_{r\a}$ is a finite-dimensional \red $\mathfrak s_{\a}$\black -module, \red $ \g_{k\a}$ is nonzero only if $k\in \frac{1}{2}\mathbb Z$
    \black 
    and $(2)$ holds. 

    For (3), we first show that  $(\a,\beta)$ is a nonzero multiple of $\beta(h_\a)$ for any $\alpha,\beta\in R$. 
    Let $x,y\in\g_\a$ be such that $(x,\theta y)\ne 0$. Then \red by Lemma \ref{skew sym form}, $H_\a=p_\aa([x,\theta y])$ is a nonzero scalar multiple of $h_\alpha$, \black 
 %Indeed, for any $h\in\aa$,
%    $$(H_\a,h)=(p_\aa([x,\theta y]),h)=([x,\theta y],h)=(x,[\theta y,h])=\alpha(h)(x,\theta y).$$
%Now, $(x,\theta y)\ne 0$ and $\a(h)$ should be nonzero for some $h$. Hence $H_\a\ne 0$. Moreover, the above equality shows that $H_\a\in(\ker\a)^\perp$ and so proportional 
 %  \red  to $h_\alpha$, 
   where $h_\a$ is defined by $(h_\a,h)=\a(h)$ for all $h\in\aa$. 
   % Then $t_\a$ is also proportional to $h_\alpha$ and we get that $$(\a,\beta)=(t_\a,t_\beta)\ne 0$$  if and only if    $(h_\a,h_\beta)\ne 0$. 
    Hence $(h_\a,h_\beta)\ne 0$
    if and only if
    $(H_\a,H_\beta)\ne 0$. \black 
    These inequalities are equivalent to
    $\alpha(H_\beta)\ne 0$
    because
    $$(H_\a,H_\beta)=([x,\theta y],H_\beta)=(x,[\theta y,H_\beta])=\a(H_\beta)(x,\theta y)$$
     and $(x,\theta y)\ne 0$. By symmetry this is equivalent to $\beta(H_\a)\ne 0$. Thus, $(\a,\beta)$ is a nonzero multiple of $\beta(h_\a)$.

     Suppose that $a\in \red R_{sing}\black $, $\beta\in R$ but $\beta\pm\a\notin R$.  
     This means that for any $z\in \g_\beta$, $[\g_\a,z]=[\g_{-\a},z]=0$. In particular $[x,z]=[y,z]=[\theta x,z]=[\theta y,z]=0$. This implies that 
     $$0=[[x,\theta y]-\theta([x,\theta y]),z]=2[p_\aa([x,\theta y]),z]=2[H_\a,z]=2\beta(H_\a)z.$$
     Thus $\beta(H_\a)=0$  as required.

     Suppose $\alpha,\beta\in R_{sing}$. Then $\g_\a,\g_\beta$ are purely odd. Since $\g$ is a Kac-Moody, if  $\a+\beta\in R$, then $\g_{\a+\beta}=[\g_\a,\g_\beta]$. Hence $\g_{\a+\beta}$ must contain a nonzero even vector and so $\a+\beta\in R_{reg}$.
     \end{proof}
\blue
\begin{example} \label{GRR Hermitian example}
    Not every GRR system arises from a symmetric pair. For example,
    $$\Delta_{reg}=\left\{\eps_i-\eps_j:1\le i\ne j\le n\right\},\quad \Delta_{sing}=\left\{\pm(\eps_i+\eps_j):1\le i\ne j\le n\right\}.$$
    In fact, given a symmetric pair of a Lie algebra $\g=\k\oplus \p$ corresponding to a Hermitian symmetric space, we can set the roots of $\k$ to be the regular roots and the roots of $\p$ to be the singular roots. Other exceptional examples include
    \begin{itemize}
    \item $\Delta_{reg}=\left\{\pm\eps_i\pm\eps_j,\pm2\eps_i\right\},\quad \Delta_{sing}=\left\{\pm\eps_i\pm\eps_j\pm\eps_k\right\}$, where $1\le i\ne j\ne k\le 4$.
    
    \item $\Delta_{reg}=\left\{\pm\eps_i\pm\eps_j,\pm\eps_i\ :\ 1\le i\ne j\le 5\right\}$,\quad $\Delta_{sing}=\left\{\frac{1}{2}\left(\pm\eps_1\pm\eps_2\pm\eps_3\pm \eps_4\pm\eps_5\right)\right\}$.
    
        \item $\Delta_{reg}=\left\{\pm2\eps_1,\pm2\eps_2,\pm2\eps_3,\pm2\eps_4\right\},\quad \Delta_{sing}=\left\{\pm\eps_1\pm\eps_2\pm\eps_3\pm\eps_4\right\}.$

    \end{itemize}
\end{example}

\begin{remark}
    In \cite[Sec. 6.1]{Sh2}  the notion of RGRS was defined and a statement similar to Proposition \ref{GRR prop} is proved. An RGRS is a GRR system with the following additional property. Write $R_{reg}=R_1\times\ldots\times R_k$ where $R_i$ is an irreducible component. Let $W_i$ be the corresponding Weyl group and $p_i$ be the projection onto $\operatorname{span} R_i$. For an RGRS, we assume that $p_i(R_{sing})/ \{0\}$ is a union of small $W_i$-orbits. We recall that a $W_i$-orbit $X$ is called small if $x-y\in R_i$ for any $x,y\in W$, $x\ne \pm y$.

    By \cite[Prop. 6.6]{Sh2}, the restricted roots $\Delta(\aa,\g)$ form an RGRS. However, not every GRR system is an RGRS. Consider the first system described in Example \ref{GRR Hermitian example} for $n\ge 5$. Here $\operatorname{span}R_1=\{\eps_1+\cdots+\eps_n\}^\perp$. Take $x=p_1(\eps_1+\eps_2)$, $y=p_1(\eps_3+\eps_4)$ which are in the same orbit whereas  $x-y=\eps_1+\eps_2-\eps_3-\eps_4$ is not a projection of the root (for $n=4$, $x=-y$).  We also note that not every RGRS arize from a symmetric pair, for example the last exceptional system in Example \ref{GRR Hermitian example}.
    
\end{remark}
\black 
%%%%*********************************%%%%
\section{The Image of the Chavalley Restriction Map}
%%%%*********************************%%%% 
We begin proving Theorem A. We start by showing that the image of the Chevalley restriction map lies in the space $I(\aa^*)$. This means that the image satisfies two types of invariance conditions: $W$-invariance and a condition related to roots whose root spaces are purely odd.

To show that the image is $W$-invariant, we note that $\aa\subset \p_{\bar 0}$. This means we can first restrict functions from $\p$ to $\p_{\bar 0}$ and then to $\aa$. The restriction from $\p$ to $\p_{\bar 0}$ gives a $\k_{\bar 0}$-invariant function. 
\red By the Chevalley restriction theorem for  $(\g_{\bar 0},\k_{\bar 0})$, we get that the restriction to $\aa$ of functions $S(\p_{\bar 0})^{\k_{\bar 0}}$ is in $S(\aa^*)^W$.
 \black

We now prove that the second type of invariance condition holds. Suppose that the root space $\alpha$ is purely odd but one of its multiples has an even root vector. Then the derivative condition follows from \red  the following proposition. \black 

Let $\mathfrak{p}_{\alpha}:=\left(\left(\mathfrak{g}_{\alpha}\oplus\mathfrak{g}_{-\alpha}\right)\cap\mathfrak{p}\right)\oplus\mathfrak{a}$ and $\k_\alpha=\left(\g_{\alpha}\oplus\g_{-\alpha}\right)\cap\k$. We prove the derivative condition by first restricting to $S(\p_\alpha)^{\k_\alpha}$.

\begin{proposition} \label{odd invariance condition}
For a singular root $\alpha$ of multiplicity $2n$, the projection $f_{0}$ of $f\in S(\mathfrak{p}^{*})^{\mathfrak{k}}$
to $S(\mathfrak{a}^{*})$ satisfies 
\[
\left(D_{h_{\alpha}}\right)^{k}\left(f_{0}\right)\in\left\langle \alpha\right\rangle 
\]
 for $k=1,3,\ldots,2n-1$, that is $\left(D_{h_{\alpha}}\right)^{k}\left(f_{0}\right)$
is zero on $\ker\alpha$. 
\end{proposition}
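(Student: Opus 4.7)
The plan is to restrict $f$ first to the $\k_\alpha$-stable subspace $\p_\alpha = V\oplus\aa$, where $V := (\g_\alpha\oplus\g_{-\alpha})\cap\p$, and then to extract the derivative condition from the $\k_\alpha$-invariance of the restriction. Since $\alpha$ is singular we have $\g_{2\alpha}\cap\g_{\bar 0}=0$, and a direct bracket calculation confirms $[\k_\alpha,\p_\alpha]\subset\p_\alpha$ together with $[\k_\alpha,\ker\alpha]=0$. Consequently, translation along $\ker\alpha$ commutes with the $\k_\alpha$-action, so it suffices to show that for each fixed $a\in\ker\alpha$ the derivative $(D_{h_\alpha})^k f_0$ vanishes at $a$ for odd $k\le 2n-1$; in effect the coordinates along $\ker\alpha$ behave as parameters.

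The next step is to pick a basis $\{z_1,\ldots,z_n,w_1,\ldots,w_n\}$ of $\g_\alpha$ which is symplectic for $B_\theta$ (possible by Lemma \ref{skew sym form}), and set $v_j=z_j-\theta z_j$, $u_j=w_j-\theta w_j$ as a basis of $V$ and $X_j=z_j+\theta z_j$, $Y_j=w_j+\theta w_j$ as a basis of $\k_\alpha$. Using Lemma \ref{skew sym form} together with $\g_{2\alpha}\cap\g_{\bar 0}=0$, one computes $[X_j,v_i]=[Y_j,u_i]=0$, $[X_j,u_i]=2\delta_{ij}h_\alpha$, $[Y_j,v_i]=-2\delta_{ij}h_\alpha$, and $[X_j,h]=-\alpha(h)v_j$, $[Y_j,h]=-\alpha(h)u_j$ for $h\in\aa$. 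Introducing dual coordinates $t_1,\ldots,t_r$ on $\aa$ (with $t_1$ dual to $h_\alpha$) and Grassmann variables $a_j,b_j$ dual to $v_j,u_j$, the odd derivations become
\[
X_j=-2b_j\frac{\partial}{\partial t_1}-ct_1\frac{\partial}{\partial a_j},\qquad Y_j=2a_j\frac{\partial}{\partial t_1}-ct_1\frac{\partial}{\partial b_j},
\]
with $c=\alpha(h_\alpha)$. Writing the restricted function as $\tilde f=\sum_I F_I(t)\,\xi^I$, indexed by Grassmann monomials $\xi^I$ in the $a_j,b_j$ and with $F_\emptyset=f_0$, the equations $X_j\tilde f=Y_j\tilde f=0$ translate (on extracting each Grassmann coefficient) into identities of the form $\partial F_I/\partial t_1=\pm\tfrac{ct_1}{2}\,F_{I\cup\{a_j,b_j\}}$ whenever $a_j,b_j\notin I$.

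Iterating this relation, starting from $I=\emptyset$ and enlarging $I$ by a fresh disjoint pair at each of $m+1$ successive steps, one expresses $(\partial/\partial t_1)^{2m+1}F_\emptyset$ as a polynomial combination of $F$-functions, each carrying at least one factor of $t_1$; since the pairs available are $n$ in number, this succeeds exactly so long as $m+1\le n$, i.e., $k=2m+1\le 2n-1$. Because $t_1$ is a nonzero scalar multiple of $\alpha|_\aa$, this yields $(D_{h_\alpha})^k f_0\in\langle\alpha\rangle$ for odd $k\le 2n-1$. The main obstacle is the combinatorial bookkeeping of which pair is added at each step of the recursion, together with careful sign tracking in the Grassmann algebra; the underlying linear algebra of odd derivations is otherwise routine. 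The isotropic case $c=0$ (when $\alpha\in\aa^*$ is null) must be noted separately but is immediate: then each $X_j$ loses its $\partial_{a_j}$ term, the lowest-degree invariance equation forces $\partial f_0/\partial t_1=0$, and all higher $t_1$-derivatives vanish.
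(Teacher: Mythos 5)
Your proof follows essentially the same strategy as the paper: restrict the invariant $f$ to $\p_\alpha=\aa\oplus((\g_\alpha\oplus\g_{-\alpha})\cap\p)$, choose a $B_\theta$-symplectic basis of $\g_\alpha$, write the $\k_\alpha$-invariance equations as a first-order recursion $\partial_{t_1}F_I\in t_1\cdot\mathbb C\,F_{I\cup\{\text{pair}\}}$ on the Grassmann coefficients, and iterate. The one place you are slightly looser than the paper is the final iteration step (the phrase ``one expresses $(\partial/\partial t_1)^{2m+1}F_\emptyset$ as a polynomial combination of $F$-functions, each carrying at least one factor of $t_1$'' needs an explicit inductive bookkeeping, much like the paper's claim about $(D_{h_\alpha})^{2j+1}f_0$ being a linear combination of $\alpha^i f_k$), and there is a small sign slip in your formula for $X_j$ ($X_j\cdot a_j=\alpha(h)$, so the $\partial_{a_j}$-coefficient should be $+ct_1$ rather than $-ct_1$), but neither affects the structure of the argument.
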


Let us examine the action of $\k_\a$ on $\p_\a$. By Lemma \ref{skew sym form}, we can take $e_1,\ldots e_{n},e_1',\ldots,e_{n}'$ to be a basis of $\g_\a$ such that $B_\theta(e_i,e_{j}')\in \mathbb C^* \delta_{ij}$, $B_\theta\left(e_i,e_{j}\right)=B_\theta\left(e_i',e_{j}'\right)=0$
and they are normalized such that
\[\left[e_i+\theta e_i,e_{j}'-\theta e_{j}'\right]=\red -[e_i,\theta e_j']+[\theta e_i,e_j']=-p_\aa \left( [e_i,\theta e_j']\right)=\black \delta_{ij}h_{\alpha}.\]
We use the fact that for any $e,e'\in \g_\a$, $[e,e']=0$, for otherwise, $[e,e']$ is an even root vector of $2\a$.
Note that for any $h\in\mathfrak{a}$,
\[\left[e_i+\theta e_i,h\right]=-\alpha(h)\left(e_i-\theta e_i\right).\]
Let 
$\xi_1,\ldots,\xi_{n},\eta_1,\ldots,\eta_{n}\in \p^*_\alpha$
 be such that
\[\xi_i\left(e_{j}-\theta e_{j}\right)=\eta_i\left(e_{j}'-\theta e_{j}^{'}\right)=\delta_{ij},\]
\[\xi_i\left(e_{j}'-\theta e_{j}'\right)=\eta_i\left(e_{j}-\theta e_{j}\right)=0,\]
and 
\[\xi_i\left(\mathfrak{a}\right)=\eta_i\left(\mathfrak{a}\right)=0.\]
Let $f\in S(\mathfrak{p}_{\alpha}^{*})$.
It follows from the above that
\begin{equation}
\label{formula:action of e+theta e}
\left(e_i+\theta e_i\right).f=\alpha\frac{\partial f}{\partial\xi_i}-\eta_iD_{h_{\alpha}}\left(f\right).
\end{equation}

\begin{proof}[Proof of Proposition \ref{odd invariance condition}]
Let $f\in S(\mathfrak{p}^{*})^{\mathfrak{k}}$. Then 
\[
f|_{\p_\alpha}=f_{0}+\sum f_{i_1,\ldots,i_{m}}^{j_1,\ldots,j_{m'}}\xi_{i_1}\cdots\xi_{i_{m}}\eta_{j_1}\cdots\eta_{j_{m'}}
\]
where $f_{0},f_{i_1,\ldots,i_m}^{j_1,\ldots,j_{m'}}\in S(\mathfrak{a}^{*})$
and the sum runs on all subsets $\left\{ i_1,\ldots,i_{m}\right\} ,\left\{ j_1,\ldots,j_{m'}\right\} \subseteq\left\{ 1,\ldots,n\right\}. $
%(in fact, $m+m'$ has to be even but we do not use this fact). 
By
$\mathfrak{k}$-invariance, $\left(e_k+\theta e_k\right).\left(f\right)=0$.
Denote $f_k:=f_{1,\ldots,k}^{1,\ldots,k}$, $k\ge 1$ and $f_{0}^{0}:=f_{0}$.
By (\ref{formula:action of e+theta e}), the $S(\mathfrak{a}^{*})$-coefficient
of $\xi_1\cdots\xi_{k-1}\eta_1\ldots\eta_{k}$ in $L_{e_{k}+\theta e_{k}}\left(f\right)$
is 
\begin{equation}
\alpha f_k-D_{\text{\ensuremath{h_{\alpha}}}}f_{k-1}=0. \label{eq}
\end{equation}
for $k\le n$. 

% $D_{h_{\alpha}}f_{k-1}=\alpha f_k$.
We claim that for $0\le j\le n-1$ , one has 
\[
\left(D_{h_{\alpha}}\right)^{2j+1}f_{0}\in\left\langle \alpha\right\rangle .
\]
In fact, we claim that $\left(D_{h_{\alpha}}\right)^{2j+1}f_{0}$
is a linear combination of terms of the form $\alpha_1^{i}f_{1,\ldots,k}^{1,\ldots,k}$
for $i$ odd and $k\le2j+1$. We prove this claim by induction on
$j$. 
For $j=0$, it follows from (\ref{eq}).
Suppose it holds for $j-1$ . 
Let us show that it holds for $j$. One
has
\begin{align*}
\left(D_{h_{\alpha}}\right)^{2j+1}f_{0} & =\left(D_{h_{\alpha}}\right)^2\left(\left(D_{h_{\alpha}}\right)^{2j-1}f_{0}\right)\\
 & \stackrel{\text{induction}}{=}\left(D_{h_{\alpha}}\right)^2\left(\sum_{i\text{ odd, }k\le2j-1}a_{i,k}\alpha^{i}f_{k}\right)\\
 & =\left(D_{h_{\alpha}}\right)\left(\sum_{i\text{ odd, }k\le2j-1}a_{i,k}\left(\alpha(h_{\alpha})i\alpha^{i-1}f_{k}+\alpha^{i+1}f_{k+1}\right)\right)\\
 & =\sum_{i\text{ odd, }k\le2j-1}a_{i,k}\left(\alpha(h_{\alpha})^2i(i-1)\alpha^{i-2}f_{k}+\alpha(h_{\alpha})\left(2i+1\right)\alpha^{i}f_{k+1}+\alpha^{i+2}f_{k+2}\right).
\end{align*}
Thus $\left(D_{h_{\alpha}}\right)^{2j+1}f_{0}\in\left\langle \alpha\right\rangle $
and the assertion follows.
\end{proof}

\begin{remark}
Note that one can not continue the argument for $j\ge n$ since (\ref{eq}) does not give information about $D_{h_{\alpha}}f_{n}$.
\end{remark}

%\blueShould we remove the following example?\black 

\begin{example}
Suppose that $n=3$, that is, the multiplicity of $\mathfrak{g}_{\alpha}$
is 6. Denote $D:=D_{h_{\alpha}}$ and $c:=\alpha(h_{\alpha})$. Then
\[Df_{0}=\alpha f_1\]
\[D^2f_{0}=c^2f_1+\alpha^2f_2\]
\[D^{3}f_{0}=\left(c^2+2c\right)\alpha f_2+\alpha^{3}f_3\]
\[D^{4}f_{0}=\left(c^{3}+2c^2\right)f_2+\left(c^2+2c+3\right)\alpha^2f_3+\alpha^{4}f_{4}\]
\[D^{5}f_{0}=\left(3c^{3}+4c_{4}^2+6c\right)\alpha f_3+\left(c^2+2c+3\right)\alpha^{3}f_{4}+\alpha^{5}f_{5}.\]
\end{example}

%%%%*********************************%%%%
\section{Injectivity of the Chevalley restriction map}
%%%%*********************************%%%%

The injectivity of the Chevalley restriction map follows from the following general lemma.
\begin{lemma}\label{lem-inj} Let $\g$ be a finite-dimensional Lie superalgebra and $V$ be a finite dimensional $\g$-module \red and assume that $\g_{\bar 1} u=V_{\bar 1}$ for some $u\in V_{\bar 0}$. \black
Then the restriction map $\mathbb C[V]^\g\to \mathbb C[V_{\bar 0}]^{\g_{\bar 0}}$ is injective.
\end{lemma}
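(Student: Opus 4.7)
The plan is to argue by the $\eta$-degree filtration. Choose bases giving even coordinates $x_1,\ldots,x_m$ on $V_{\bar 0}$ and odd coordinates $\eta_1,\ldots,\eta_n$ on $V_{\bar 1}$, so $\mathbb C[V]=\mathbb C[x]\otimes \Lambda(\eta)$, and decompose each polynomial as $f=\sum_{d\ge 0}f^{(d)}$ by $\eta$-degree. The restriction to $V_{\bar 0}$ picks out $f^{(0)}$, so the hypothesis $f|_{V_{\bar 0}}=0$ reads $f^{(0)}=0$; I will prove by induction on $d$ that every $f^{(d)}$ vanishes. The key geometric ingredient is that the set $U=\{v\in V_{\bar 0}:\g_{\bar 1}\cdot v=V_{\bar 1}\}$ is Zariski open in $V_{\bar 0}$, being the locus of maximal rank for the linear map $\xi\mapsto \xi\cdot v$; by hypothesis $u\in U$, so $U$ is Zariski dense.

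For the inductive step I exploit $\g_{\bar 1}$-invariance of $f$. Each $\xi\in\g_{\bar 1}$ acts on $\mathbb C[V]$ as an odd derivation that decomposes as $D_++D_-$, where $D_+$ (coming from the dual action on even coordinates) raises $\eta$-degree by one, and $D_-=\sum_l \ell_\xi^{(l)}(x)\,\partial_{\eta_l}$ (coming from the action on odd coordinates) lowers it by one, with linear coefficients characterized by $\ell_\xi^{(l)}(v)=(\xi\cdot v)_l$ for $v\in V_{\bar 0}$. Extracting the $\eta$-degree $(d-1)$-component of $\xi\cdot f=0$ and using $f^{(d-2)}=0$ from the induction hypothesis to annihilate the $D_+$ contribution yields $D_-f^{(d)}=0$. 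Evaluating at any $v_0\in U$ gives $\sum_l \lambda_l\,\partial_{\eta_l}f^{(d)}(v_0,\eta)=0$, where $\lambda=\xi\cdot v_0$ ranges over all of $V_{\bar 1}$ as $\xi$ varies; so every $\partial_{\eta_l}f^{(d)}(v_0,\cdot)$ vanishes, and since $d\ge 1$ this forces $f^{(d)}(v_0,\cdot)\equiv 0$. Zariski density of $U$ then lifts the pointwise vanishing to $f^{(d)}\equiv 0$ as a polynomial, closing the induction.

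The only real obstacle is bookkeeping: fixing consistent sign conventions for the odd derivation and the dual $\g$-action, so that the decomposition $\xi=D_++D_-$ and its interaction with the $\eta$-grading are unambiguous. Once that is in place, the whole statement reduces to the elementary observation that a surjective linear map separates vectors, combined with the Zariski density of $U$. Conceptually, the single hypothesis $\g_{\bar 1}\cdot u=V_{\bar 1}$ supplies enough odd tangent directions at a generic point of $V_{\bar 0}$ to propagate the vanishing of $f$ on $V_{\bar 0}$ into vanishing in every $\eta$-direction.
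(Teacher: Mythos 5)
Your argument is correct and is essentially the paper's own proof: both fix the $\eta$-degree grading on $\mathbb C[V]=\mathbb C[V_{\bar 0}]\otimes\Lambda(\eta)$, split the action of an odd element into degree-raising and degree-lowering parts, use the Zariski-dense locus $U$ where $\g_{\bar 1}\cdot v=V_{\bar 1}$ to recover all $\partial/\partial\eta_l$ at a generic point, and conclude by density. The only cosmetic difference is that the paper argues by picking the lowest-degree nonzero homogeneous component of $f$ (a well-ordering formulation) rather than running an explicit induction on $d$; the two are logically interchangeable.
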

\red This lemma is a special case of \cite[Prop. 1] {Serge2}. We give a self contained proof.\black
\begin{proof} Note that the set $U$ of all $u\in V_{\bar 0}$ satisfying the assumption of the lemma is \red Zariski \black  open and hence dense in $V_{\bar 0}$.

Choose a basis $\xi_1,\dots,\xi_n$ for $V_{\bar 1}^*$ and
$x_1,\dots x_m\in V_{\bar 0}^*$. We use the identity
  $$\mathbb C[V]=\mathbb C[V_{\bar 0}]\otimes\Lambda(\xi_1,\dots,\xi_n)$$
  and introduce a $\mathbb Z$-grading on $\mathbb C[V]$ by setting 
  $\mathbb C[V_{\bar 0}]$ to have degree zero and all $\xi_i$-s to have degree $1$. 
  For any $v\in V_{\bar 0}$ we define the evaluation map
  $$\operatorname{ev}_v: \mathbb C[V]\to \Lambda(\xi_1,\dots,\xi_n)$$
  in the natural way. Consider the representation map $\rho:\g\to\g\l(V)$.  
  For every $X\in\g_{\bar 1}$, \red $\rho(X)\in\mathfrak{gl}(V)$   can be written in the form $\rho(X)=X^+ +X^-$
  where $X^+V_{\bar 0}=0$ and $X^-V_{\bar 1}=0$. \black
  In particular, in the action on the dual space $V^*$,
$X^+=\sum_{i,j}a_{ij}\xi_i\frac{\partial}{\partial x_j}$
  and $X^-=\sum_{i,j} b_{ij}y_j\frac{\partial}{\partial \xi_i}$ for some 
  %$\varphi_1,\dots,\varphi_n\in V_{\bar 0}^*$ and
   $a_{ij},b_{ij}\in\mathbb C$. 
  %In particular, $X^+V_{\bar 0}=0$ and $X^-V_{\bar 1}=0$. 
  Note that $X^{-}$ commutes with $\operatorname{ev}_v$
  for any $v\in V_{\bar 0}$ and  
  $\rho(X)(v)=X^{-}(v)=\sum b_{ij}y_j(v)\frac{\partial}{\partial \xi_i}$ is a derivation in $\Lambda(\xi_1,\dots,\xi_n)$.

  Let $f\in \mathbb C[V]^\g$ lie in the kernel of the restriction map and
  $f_k$ be the lowest-degree nonzero term of $f$ in our grading. We have $k>0$.
  To prove the lemma it suffices to show that $f_k=0$.
  For any $X\in\g_{\bar 1}$, we have $X^-(f_k)=0$ and hence 
  $X^{-}(v)(\operatorname{ev}_v f_k)=0$ for any $v\in V_{\bar 0}$.
  We have $X^+(v)=0$ for any $v\in V_{\bar 0}$ and therefore $\rho(X)(v)=X^-(v)$.
  For a given $u\in U$, the set $\{X^{-}(u)\mid X\in\g_{\bar 1}\}$ contains $\frac{\partial}{\partial \xi_i}$ for all $i=1,\dots n$. Therefore $\frac{\partial}{\partial \xi_i}({ev}_u f_k)=0$ for $i=1,\ldots,n$ and $\operatorname{ev}_u f_k=0$.
  By the density of $U$ we get that $f_k=0$.
  \end{proof}

Now we can prove the following proposition:
\begin{proposition} Let $\g=\k\oplus\p$ have Iwasawa decomposition. Then the restriction map $\mathbb C[\p]^\k\to \mathbb C[\mathfrak a]$ is injective.
\end{proposition}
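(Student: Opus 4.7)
The plan is to factor the restriction map as a composition
$$\mathbb C[\p]^\k \xrightarrow{r_1} \mathbb C[\p_{\bar 0}]^{\k_{\bar 0}} \xrightarrow{r_2} \mathbb C[\aa],$$
where $r_1$ is restriction from $\p$ to $\p_{\bar 0}$ and $r_2$ is restriction from $\p_{\bar 0}$ to $\aa$. The map $r_2$ is injective by the classical Chevalley--Kostant--Rallis theorem applied to the reductive symmetric pair $(\g_{\bar 0},\k_{\bar 0})$. The map $r_1$ is a direct instance of Lemma \ref{lem-inj}, taking $\g$ there to be our $\k$ and $V$ there to be our $\p$, provided we can produce some $u\in\p_{\bar 0}$ satisfying $[\k_{\bar 1},u]=\p_{\bar 1}$. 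The entire content of the argument is this verification.

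I would take $u=a\in\aa$ to be regular in the sense that $\alpha(a)\neq 0$ for every nonzero $\alpha\in\Delta(\aa,\g)$; such $a$ form a Zariski-dense open subset of $\aa$. The restricted-root decomposition of $\g$ then gives $\p_{\bar 1}^a=\p_{\bar 1}^\aa$, and the Iwasawa hypothesis $\p^\aa=\aa\subset\p_{\bar 0}$ forces $\p_{\bar 1}^\aa=0$. To prove $[\k_{\bar 1},a]=\p_{\bar 1}$ I would pair with the twisted form $B_\theta(x,y)=(x,\theta y)$ introduced above. Since $\theta$ acts by $\pm 1$ on $\k$ and $\p$, the original form satisfies $(\k,\p)=0$, hence $B_\theta(\k_{\bar 1},\p_{\bar 1})=0$; combined with nondegeneracy of $B_\theta$ on $\g_{\bar 1}$, this shows that $B_\theta$ restricts to nondegenerate forms on $\k_{\bar 1}$ and on $\p_{\bar 1}$ separately.

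Now let $Y\in\p_{\bar 1}$ be $B_\theta$-orthogonal to $[\k_{\bar 1},a]$. Invariance of $B_\theta$, together with $\theta a=-a$, yields
$$0=B_\theta\bigl([a,X],Y\bigr)=B_\theta\bigl(X,[a,Y]\bigr)$$
for every $X\in\k_{\bar 1}$. Since $[a,Y]\in[\p,\p]\subset\k$ lies in $\k_{\bar 1}$ and is $B_\theta$-orthogonal to all of $\k_{\bar 1}$, nondegeneracy of $B_\theta$ on $\k_{\bar 1}$ forces $[a,Y]=0$, whence $Y\in\p_{\bar 1}^a=0$. Thus $[\k_{\bar 1},a]=\p_{\bar 1}$, Lemma \ref{lem-inj} applies, and composing with the injectivity of $r_2$ completes the proof. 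The principal obstacle is the surjectivity $[\k_{\bar 1},a]=\p_{\bar 1}$; once this is established via the $B_\theta$-pairing argument, the rest is a routine two-step composition.
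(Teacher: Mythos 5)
Your proof is correct and follows the paper's overall strategy (factor through $\mathbb C[\p_{\bar 0}]^{\k_{\bar 0}}$, invoke Lemma~\ref{lem-inj}, and finish with the classical Chevalley--Kostant--Rallis theorem), but you establish the crucial surjectivity $[\k_{\bar 1},a]=\p_{\bar 1}$ by a genuinely different mechanism. The paper's argument is a direct root-space computation: for generic $h\in\aa$, the operator $\operatorname{ad}_h$ is invertible on each restricted root space $\g_\alpha$, $\alpha\neq 0$, and it swaps the $\k$-part and $\p$-part of $\g_\alpha\oplus\g_{-\alpha}$, so $\operatorname{ad}_h\colon \k\cap(\g_\alpha\oplus\g_{-\alpha})\to\p\cap(\g_\alpha\oplus\g_{-\alpha})$ is an isomorphism, and summing over $\alpha$ gives the result. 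Your argument is instead a duality argument: after checking that the twisted form $B_\theta$ restricts nondegenerately to $\k_{\bar 1}$ and $\p_{\bar 1}$ separately (which follows from $(\k,\p)=0$ and nondegeneracy of $B_\theta$ on $\g_{\bar 1}$), you show the $B_\theta$-orthogonal complement of $[\k_{\bar 1},a]$ inside $\p_{\bar 1}$ is zero, using the adjointness $B_\theta([a,X],Y)=B_\theta(X,[a,Y])$ (valid because $\theta a=-a$) and the Iwasawa hypothesis $\p^a=\p^\aa=\aa$. The paper's route is arguably more elementary and self-contained, since it does not require the nondegeneracy of $B_\theta$ on the odd components; your route is more ``formal'' and transparently reduces the surjectivity to the injectivity of $\operatorname{ad}_a$ on $\p_{\bar 1}$, which is precisely the Iwasawa condition. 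Both rely in the same way on the generic choice of $a\in\aa$ and on $\theta a=-a$. The sign computation in the adjointness step is correct, and the dimension count from nondegeneracy does give $[\k_{\bar 1},a]=\p_{\bar 1}$, so there is no gap.
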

\begin{proof} \red Let us choose a generic $h\in\mathfrak a$. Then $\operatorname{ad}_h$ is non-degenerate
on any restricted root space $\g_{\alpha}$ and hence 
$$\operatorname{ad}_h: \k\cap(\g_\alpha\oplus\g_{-\alpha})\to \p\cap(\g_\alpha\oplus\g_{-\alpha})$$ is an isomorphism. \black Since $\p_{\bar 1}$ is the sum of
$\p_{\bar 1}\cap(\g_\alpha\oplus\g_{-\alpha})$, we obtain
$[h,\k_{\bar 1}]=\p_{\bar 1}$.
By Lemma \ref{lem-inj} the restriction map $\mathbb C[\p]^\k\to \mathbb C[\p_{\bar  0}]^{\k_{\bar{0}}}$ is injective. By classical result  the restriction map
$\mathbb C[\p_{\bar  0}]^{\k_{\bar{0}}}\to\mathbb C[\mathfrak a]$ is injective. The statement follows.
  \end{proof}

%%%%%%%%%%%%%%%%%%%%%%%%%%%%%%%%%%%%%%%%%%%%%%%%%%%%%%%%%%
%%%%%%**********************************%%%%%%%%%%%%%%%%%%
\section{Surjectivity of the Chevalley restriction map.}
%%%%%%**********************************%%%%%%%%%%%%%%%%%%
%%%%%%%%%%%%%%%%%%%%%%%%%%%%%%%%%%%%%%%%%%%%%%%%%%%%%%%%%%
%By Proposition \ref{odd invariance condition}, the image of the Chevalley restriction map $C_\theta$ lies in $I(\aa^*)$.
We describe $I(\aa^*)$ in all cases and show that the Chevalley restriction map $C_\theta$ surjects onto $I(\aa^*)$. 
This completes the proof of Theorem A. 
The computation of the set of restricted roots in each case, as well as of $h_\a$ for every singular root is given in the Appendix. 
We also give the image of the restriction map from $I(\h^*)$ to $I(\aa^*)$ and prove Theorem B.

For most cases, we use the following argument to show that $C_\theta$ is surjective. Take the following commutative diagram of restriction maps,
\[ 
\begin{tikzcd}
S(\g^*)^\g \arrow[r,"C"] \arrow[d, "R_\theta" ]
& I(\h^*) \arrow[d, "R" ] \\
S(\p^*)^\k \arrow[r,  "C_\theta" ]
&  I(\aa^*)
\end{tikzcd}\]
Note that $C$ is an isomorphism by \cite{Serge2} and $C_\theta$ is an embedding. 
\red 
In Theorem A below, we show that $C_\theta$ is always an isomorphism. 
If $R$ is surjective then this is straightforward---for the remaining cases we give a separate argument. As a consequence we deduce that $R$ is surjective if and only if $R_\theta$ is surjective, which gives Theorem B.
\begin{comment}
\red Thus, if $R$ is surjective then $C _\theta$ is surjective and hence an isomorphism and $R_\theta$ is an surjective. 

In Theorem A below that in fact $C_\theta$ is always an isomorphism thus  $R_\theta$ is surjective if and only if $R$ is surjective.
This gives Theorem B.
\end{comment}
\black 
%%%%%%%%%%%%%%%%%%%%%%%%%
%%%% surjectivity gl(m|2n), osp(m|2n)    %%%%
%%%%%%%%%%%%%%%%%%%%%%%%%
\subsection{Case $\g=\mathfrak{gl}(m|2n), \mathfrak k=\mathfrak{osp}(m|2n)$.}\label{surj_gl_osp}

Here
$$I(\aa^*)=\left\{f\in\mathbb C[\eps_1,\ldots, \eps_m,\delta_1,\ldots,\delta_n]^{S_m\times S_n} \mid \left(\frac{2\partial}{\partial \eps_1}+\frac{\partial}{\partial \delta_1} \right)f\in \left<\eps_1-\delta_1\right> \right\},$$
(see Case \ref{gl_osp} in the Appendix).

By \cite[Thm. 2]{SV}, $I(\aa^*)$ is generated by $\phi_k=\eps_1^k+\ldots+\eps_{m}^{k}-2\delta_1^{k}-\dots-2\delta_{n}^{k}$, $k\in\mathbb Z_{\ge 1}$, which are the projections of $\bar\phi_k=\beps_1^{k}+\ldots+\beps_{m}^{k}-\bdelta_1^{k}-\dots-\bdelta_{2n}^{k}$ from $I(\h^*)$.
Hence the restriction map from $I(\h^*)$ to $I(\aa^*)$ is surjective and so is $R_\theta$.

\red It is natural to consider the same restriction of the involution to of $\mathfrak{sl}(m|2n)$. We let $\tilde\aa=\aa\cap\mathfrak{sl}(m|2n)=\{a\in\aa\mid\bar\phi_1(a)=0\}$.
  
\begin{proposition}
    The ring $S((\p\cap\mathfrak{sl}(m|2n))^*)^\k$ is isomorphic to $I(\tilde\aa^*)$. Moreover 
    $I(\tilde\aa^*)\cong I(\aa^*)/\langle \phi_1\rangle$.
    
%    $$I_{\mathfrak{sl}}(\aa^*):= I(\aa^*)/ \langle\eps_1+\ldots+\eps_m- 2 \d_1-\ldots- 2 \d_{ n}\rangle.$$    For the pair $\g=\mathfrak{psl}(2n|2n)$, $\k=\mathfrak{osp}(2n|2n)$, the ring $S(\p^*)^\k$ is isomorphic to    $$I_{\mathfrak{psl}}(\aa^*):=\{f\in I(\aa^*)_{\mathfrak{sl}} \mid D_z f=0 \}
     %%%%%%%\cap \mathbb C\left[\eps_i-\eps_j, \eps_i-\d_k, \d_l-\d_k \mid i,j=1,\ldots,2n;k,l=1,\ldots,n   \right]$$
  %  $$ where
  %   $I=\{f\in I(\aa^*) \mid D_z f=0 \}
     %\cap \mathbb C\left[\eps_i-\eps_j, \eps_i-\d_k, \d_l-\d_k \mid i,j=1,\ldots,2n;k,l=1,\ldots,n   \right]
   %  $$.
\end{proposition}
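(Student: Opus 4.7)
The plan is to prove both isomorphisms via the commutative diagram
\[
\begin{tikzcd}
S(\p^*)^\k \arrow[r, "C_\theta"] \arrow[d, "r_\p"'] & I(\aa^*) \arrow[d, "r_\aa"] \\
S((\p')^*)^\k \arrow[r, "C_\theta'"] & S(\tilde\aa^*),
\end{tikzcd}
\]
where $\p' = \p \cap \mathfrak{sl}(m|2n)$, the horizontal maps are Chevalley restriction, and the vertical maps are restriction along inclusions. The key tool is the identity matrix $I \in \g$: it is central (hence $\k$-fixed), satisfies $\theta(I) = -I$, lies in $\aa$, and has $\phi_1(I) = m - 2n$. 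Assuming $m \ne 2n$, we obtain $W$-stable, $\k$-invariant decompositions $\aa = \tilde\aa \oplus \mathbb{C} I$ and $\p = \p' \oplus \mathbb{C} I$ with $\k$ acting trivially on $\mathbb{C} I$; the case $m = 2n$ is parallel using $\mathfrak{psl}(n|n)$.

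For the second isomorphism, I first verify $r_\aa(I(\aa^*)) \subseteq I(\tilde\aa^*)$: every singular root $\alpha = \eps_i - \delta_j$ satisfies $\alpha(I) = 0$, so $h_\alpha \in \tilde\aa$ and the derivative conditions restrict. Next, the projection $p \colon \aa \to \tilde\aa$ along $\mathbb{C} I$ is $W$-equivariant and fixes every $h_\alpha$, so its pullback $p^* \colon S(\tilde\aa^*) \hookrightarrow S(\aa^*)$ carries $I(\tilde\aa^*)$ into $I(\aa^*)$ and satisfies $r_\aa \circ p^* = \operatorname{id}$; this gives surjectivity. Finally, if $\phi_1 g \in I(\aa^*)$ with $g \in S(\aa^*)$, then $g$ is $W$-invariant, and since $D_{h_\alpha}(\phi_1) = (2\partial_{\eps_i} + \partial_{\delta_j})(\phi_1) = 0$, we get $D_{h_\alpha}^k(\phi_1 g) = \phi_1 D_{h_\alpha}^k(g)$; coprimality of $\phi_1$ and $\alpha$ in the UFD $S(\aa^*)$ forces $D_{h_\alpha}^k(g) \in \langle \alpha \rangle$, so $g \in I(\aa^*)$. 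Thus $\ker(r_\aa|_{I(\aa^*)}) = \phi_1 \cdot I(\aa^*)$, giving $I(\tilde\aa^*) \cong I(\aa^*)/\langle \phi_1 \rangle$.

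For the first isomorphism, $C_\theta$ is an isomorphism by Theorem A, and $r_\p$ is surjective because $S(\p^*)^\k = S((\p')^*)^\k[Z]$ (with $Z \in \p^*$ dual to $I$) via the $\k$-invariant splitting $\p = \p' \oplus \mathbb{C} I$, so $r_\p$ corresponds to $Z \mapsto 0$. The map $C_\theta'$ is injective by Lemma \ref{lem-inj} (noting $\p'_{\bar 1} = \p_{\bar 1}$ since odd elements have zero supertrace, and that for generic $h \in \tilde\aa$ the Section 4 argument still yields $[h, \k_{\bar 1}] = \p_{\bar 1}$), and its image lies in $I(\tilde\aa^*)$ by the arguments of Section 3 applied verbatim. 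Chasing the diagram and combining with the second isomorphism,
\[
C_\theta'\bigl(S((\p')^*)^\k\bigr) = r_\aa\bigl(C_\theta(S(\p^*)^\k)\bigr) = r_\aa(I(\aa^*)) = I(\tilde\aa^*),
\]
so $C_\theta'$ is an isomorphism onto $I(\tilde\aa^*)$.

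The one nonroutine calculation is $D_{h_\alpha}(\phi_1) = 0$ for every singular $\alpha$, equivalent to $(\phi_1, \alpha)_{\aa^*} = 0$; this holds because $\alpha$ vanishes on the line $\mathbb{C} I$ complementing $\tilde\aa$ in $\aa$. All other steps are diagram-chases and UFD manipulations.
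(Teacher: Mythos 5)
Your route is genuinely different from the paper's: you split $\p = \p' \oplus \mathbb{C}I$ and $\aa = \tilde\aa \oplus \mathbb{C}I$ using the central element $I$, obtain a $W$-equivariant section $p^*$ of the restriction $r_\aa$, and deduce both isomorphisms by a diagram chase. The paper instead introduces the auxiliary rings $J(\tilde\h^*), J(\tilde\aa^*)$ with only $S_{m-1}\times S_n$-symmetry, observes that the restriction $R_J$ is surjective by the already-established $\bigl(\mathfrak{gl}(m-1|2n),\mathfrak{osp}(m-1|2n)\bigr)$ case, and recovers surjectivity of $\tilde R : I(\tilde\h^*)\to I(\tilde\aa^*)$ by passing to $S_m$-invariants. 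Your splitting argument is exactly what the paper's own remark after this proposition records as the ``easier'' proof \emph{when $m\ne 2n$}.

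The gap is precisely the case $m=2n$. There $\phi_1(I)=m-2n=0$, so $I$ already lies in $\tilde\aa\subset\p'$, and neither $\aa=\tilde\aa\oplus\mathbb{C}I$ nor $\p=\p'\oplus\mathbb{C}I$ holds. The parenthetical ``the case $m=2n$ is parallel using $\mathfrak{psl}(n|n)$'' does not repair this: the proposition concerns $\p\cap\mathfrak{sl}(m|2n)$, not $\mathfrak{psl}$, and passing to $\mathfrak{psl}$ quotients out $\mathbb{C}I$ rather than supplying a complement. More seriously, the failure is not cosmetic: the space of $\k$-invariants in $\p$ is one-dimensional and spanned by $I$ (identify $\p$ with $S^2(\mathbb{C}^{m|2n})$ or $\Lambda^2(\mathbb{C}^{m|2n})$ as an $\mathfrak{osp}(m|2n)$-module), so when $\operatorname{str}I=0$ there is \emph{no} $\k$-invariant complement to $\p'$ in $\p$, and your argument that $r_\p$ is surjective collapses. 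On the $\aa$-side the situation is the same: a projection $p:\aa\to\tilde\aa$ of the kind you use must kill a $W$-fixed line $\mathbb{C}v$ with $\phi_1(v)\ne 0$ and $\alpha(v)=0$ for all singular $\alpha=\eps_i-\delta_j$; inside the two-dimensional $W$-fixed space $\mathbb{C}\,\textstyle\sum e_i\oplus\mathbb{C}\,\sum d_j$ the condition $\alpha(v)=0$ forces $v\in\mathbb{C}I$, so again $m\ne 2n$ is needed. The paper's $J$-ring induction is designed precisely to work uniformly in $m,n$, including $m=2n$. To make your proof complete you would need a separate argument for $m=2n$ (e.g.\ the paper's).

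One point in your favor: your observation that $D_{h_\alpha}\phi_1=0$ for singular $\alpha$, together with coprimality of $\phi_1$ and $\alpha$ in $S(\aa^*)$, gives a clean proof that $\phi_1 g\in I(\aa^*)$ implies $g\in I(\aa^*)$; this makes precise the identification of the kernel of $T$ with $\phi_1\cdot I(\aa^*)$, which the paper states only briefly.
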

\begin{proof} 
Let $\tilde\h=\h\cap\mathfrak{sl}(m|2n)$. 
We  show that the restriction map $\tilde R:I(\tilde\h^*)\rightarrow I(\tilde \aa^*)$ is surjective. Consider the following diagram 
\[ 
\begin{tikzcd}
I(\h^*) \arrow[r,"R"] \arrow[d, "S " ]
& I(\aa^*) \arrow[d, " T" ] \\
I(\tilde\h^*)\arrow[r,  "\tilde R " ]\arrow[hookrightarrow,d, " " ]
&  I(\tilde \aa^*)\arrow[hookrightarrow,d, " " ]\\
J(\tilde\h^*)\arrow[r,"R_J"]  &J(\tilde \aa^*)
\end{tikzcd}\]
where the rings $J(\tilde\h^*)$ and $J(\tilde\aa^*)$ are "slightly larger" rings than $I(\tilde\h^*)$ and $I(\tilde\aa^*)$, namely
\[J(\tilde\h^*):=\left\{f\in\mathbb C[\tilde\h^*]^{S_{m-1}\times S_{2n}} \mid \left(\frac{\partial}{\partial \beps_1}+\frac{\partial}{\partial \bdelta_1} \right)f\in \left<\beps_1-\bdelta_1\right> \right\}\]
%%%
\[J(\tilde\aa^*):=\left\{f\in\mathbb C[\tilde\aa^*]^{S_{m-1}\times S_n} \mid \left(\frac{2\partial}{\partial \eps_1}+\frac{\partial}{\partial \delta_1} \right)f\in \left<\eps_1-\delta_1\right> \right\}\]
and $R,\tilde R, R_J,S,T$ are restriction maps. 
Here $\beps_i,\bdelta_j$ and $\eps_i,\delta_j$ denote their restriction to $\tilde\h^*$ and $\tilde\aa^*$, respectively (they are linearly dependent).
We have shown that $R$ is surjective, and $S$ is surjective by \cite[Sec 0.6.2]{SV}. Let us show that $\tilde R$ is surjective.

Let $S_m$ be the permutation group of $\{\eps_1=\beps_1,\eps_2=\beps_2,\ldots,\eps_m=\beps_m\}$. Then $I(\tilde\h^*)=J(\tilde\h^*)^{S_m}$ and $I(\tilde\aa^*)=J(\tilde\aa^*)^{S_m}$.
The map $R_J$ is surjective due to the surjectivity of the map $R$ for the case $\g=\mathfrak{gl}(m-1|2n), \mathfrak k=\mathfrak{osp}(m-1|2n)$.
Since $\tilde R, R_J$ are $S_m$-equivariant, the surjectivity of $R_J$ implies surjectivity on $S_m$-invariants (and in fact surjectivity on every isotypic component of the $S_m$-module $J(\tilde\aa^*)$). This precisely give the surjectivity of $\tilde R$.

To show that  $I(\tilde\aa^*)\cong I(\tilde\aa^*)/\langle \phi_1\rangle$, we first note that $T$ is surjective because $\tilde R,S,R$ are surjective. The kernel of $T$ consists of polynomials which are zero on $\a^*\cap \mathfrak sl(m|2n)$. Thus they are divisible by $\phi_1$. 
\begin{comment}
   Let 
$\tilde{\sigma}_k= \eps^k_1+\ldots+\eps^k_{2n}- 2 \d^k_1-\ldots- 2 \d^k_{ n}$ and ${\sigma}_k= \eps^k_1+\ldots+\eps^k_{2n-1}- 2 \d^k_1-\ldots- 2 \d^k_{ n}$. We use $\eps_{i}$ for $i\leq 2n-1$
and $\d_j$ for $j\leq n$ as coordinates on $\aa_{sl}$. Since $\tilde{\sigma}_1=0$ on $\aa_{sl}$ we get $\eps_{2n}=-{\sigma}_1$  and 
$$\tilde{\sigma}_k=\sigma_k+(-1)^k\sigma_1^k.$$
Now by induction assumption we have that every $f\in I(\aa*_{sl})$ is a polynomial $F(\sigma_1,\dots,\sigma_k)$ and also is
$S_{2n}\times S_n$-invariant. Choose a minimal degree
homogeneous $f$ such that $f$ is not the restriction of some polynomial in $I(\aa^*)$, i.e.
$F(\sigma_1,\dots,\sigma_k)\neq G(\tilde{\sigma}_2,\dots,\tilde{\sigma}_k)$. Let
$$R:=F(\sigma_1,\dots,\sigma_k)-F(\tilde{\sigma}_2,\dots,\tilde{\sigma}_k)=F(\sigma_1,\dots,\sigma_k)-F({\sigma}_2+\sigma_1^2,\dots,{\sigma}_k)+(-1)^k\sigma_1^k).$$
Note that $\sigma_1$ divides $R$.
Another diagram is
\[ 
\begin{tikzcd}
I_{\mathfrak{gl}}(\h^*) \arrow[r," "] \arrow[d, " " ]
& I_{\mathfrak{sl}}(\h^*) \arrow[d, " " ] \\
I_{\mathfrak{gl}}(\aa^*)\arrow[r,  " " ]
&  I_{\mathfrak{sl}}(\aa^*) = I(\tilde \aa^*)
\end{tikzcd}\]
\[ 
\begin{tikzcd}
S(\mathfrak{psl}(n|n)^*)^\k \arrow[r," "] \arrow[d, " " ]
& S(\mathfrak{sl}(n|n)^*)^\k \arrow[d, " " ] \\
I_{\mathfrak{psl}}(\aa^*)\arrow[r,  " " ]
&  I_{\mathfrak{sl}}(\aa^*) 
\end{tikzcd}\]
We need to show that the left vertical arrow is surjective.
\end{comment}
\end{proof}
\begin{remark}
    When $m\ne 2n$, it is easier to show that $\tilde R$ is surjective. Indeed, given $f\in I(\tilde\aa^*)$, we can take the preimage $\bar f(h)=f(h-\frac{\mathfrak{str}h}{m-2n}I_{m+2n})$.
\end{remark}

When $m=2n$, once can also consider the involution acting on $\mathfrak{psl}(2n|2n)=\mathfrak{sl}(2n|2n)/\mathbb C I_{4n}$. We note that $I_{4n}\in\tilde \aa$ and that $I_{4n}$ is the center of $\g$ and in particular $\k$-invariant. We obtain the following corollary.
\begin{corollary}
    The ring $S(((\p\cap\mathfrak{sl}(2n|2n))/\mathbb C{I_{4n}})^*)^\k$ is isomorphic to 
    $$I(\tilde \aa^*)\cap \mathbb C\left[\eps_i-\eps_j, \eps_i-2\d_k, \d_l-\d_k \mid i,j=1,\ldots,2n;k,l=1,\ldots,n   \right].$$
\end{corollary}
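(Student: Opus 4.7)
The plan is to apply Theorem A to the induced symmetric pair on the quotient superalgebra $\mathfrak{psl}(2n|2n)$ and then translate the result into the claimed description.

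First, I would pass to the quotient. Since $I_{4n}$ is central in $\mathfrak{sl}(2n|2n)$ with $\theta(I_{4n})=-I_{4n}$, the line $\mathbb C I_{4n}$ is a $\theta$-stable ideal lying in $\p\cap\mathfrak{sl}(2n|2n)$. The induced involution $\bar\theta$ on $\mathfrak{psl}(2n|2n)=\mathfrak{sl}(2n|2n)/\mathbb C I_{4n}$ has fixed subalgebra $\k$ (unchanged, since $\k\cap\mathbb C I_{4n}=0$), anti-fixed space $\bar\p=(\p\cap\mathfrak{sl}(2n|2n))/\mathbb C I_{4n}$, and Cartan subspace $\bar\aa=\tilde\aa/\mathbb C I_{4n}$. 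The Iwasawa condition $\bar\p^{\bar\aa}=\bar\aa$ is inherited from the original pair since the quotient only affects the central direction.

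Next, I would apply Theorem A to $(\mathfrak{psl}(2n|2n),\bar\theta)$ to obtain
\[
S(\bar\p^*)^\k\cong I(\bar\aa^*).
\]
Because every $\aa$-root of $\g$ vanishes on the central element $I_{4n}$, the set of restricted roots, the baby Weyl group $W$, and the distinguished vectors $h_\alpha$ all descend unchanged from $\tilde\aa$ to $\bar\aa$. Consequently, the derivative conditions defining $I(\bar\aa^*)$ are precisely those defining $I(\tilde\aa^*)$, restricted to polynomials on $\bar\aa$, yielding
\[
I(\bar\aa^*)=I(\tilde\aa^*)\cap S(\bar\aa^*),
\]
where $S(\bar\aa^*)$ sits inside $S(\tilde\aa^*)$ as the subalgebra of polynomials translation-invariant along $\mathbb C I_{4n}$, equivalently, the subring generated by the linear forms in $\tilde\aa^*$ that annihilate $I_{4n}$.

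Finally, I would identify this translation-invariant subring with the concrete ring $\mathbb C[\eps_i-\eps_j,\ \eps_i-2\delta_k,\ \delta_l-\delta_k]$ appearing in the statement. Using the explicit coordinates for $\aa$ recorded in the Appendix for the $(\mathfrak{gl}(m|2n),\mathfrak{osp}(m|2n))$ case with $m=2n$, one reads off the coordinates of $I_{4n}$ and checks that the listed linear forms are exactly those vanishing on $I_{4n}$, and that they span the full annihilator by a dimension count. The main obstacle is this final coordinate verification, which requires careful bookkeeping with the normalization of the $\eps_i,\delta_k$ (the same normalization under which $h_{\eps_1-\delta_1}$ produces the derivative $2\partial/\partial\eps_1+\partial/\partial\delta_1$ in Section~\ref{surj_gl_osp}); once this is in place the corollary follows directly from the identification of $I(\bar\aa^*)$ above.
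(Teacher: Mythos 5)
Your plan is to apply Theorem~A to the induced symmetric pair on $\mathfrak{psl}(2n|2n)$ and then unwind the abstract definition of $I$ on the quotient Cartan subspace; the paper does not proceed this way, and the difference matters. Theorem~A is proved by explicit, case-by-case verification over the ten families listed in the Appendix, and the $\mathfrak{psl}(2n|2n)$ pair is not one of them. The paper extends the result to $\mathfrak{sl}(m|2n)$ and then to $\mathfrak{psl}(2n|2n)$ precisely by the Proposition and Corollary appended to Case~1 in Section~5.1 --- so the Corollary you are asked to prove \emph{is} the extension of Theorem~A to this pair. As written, your step ``apply Theorem~A to $\mathfrak{psl}(2n|2n)$'' presupposes the conclusion and is circular.

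The intended route instead descends from the Proposition immediately preceding the Corollary: one already knows that $S((\p\cap\mathfrak{sl}(2n|2n))^*)^\k \cong I(\tilde\aa^*)$ via the injective restriction map. Since $I_{4n}$ is central, hence $\k$-invariant, and lies in $\tilde\aa$, the directional derivative $D_{I_{4n}}$ preserves $S((\p\cap\mathfrak{sl}(2n|2n))^*)^\k$ and intertwines with restriction to $\tilde\aa$. An invariant polynomial descends to $(\p\cap\mathfrak{sl}(2n|2n))/\mathbb C I_{4n}$ iff it lies in the kernel of $D_{I_{4n}}$, and by injectivity of the restriction map this is equivalent to its restriction to $\tilde\aa$ lying in the kernel of $D_{I_{4n}}$ there; that kernel is exactly the subring of $S(\tilde\aa^*)$ generated by the linear forms vanishing on $I_{4n}$, which yields the intersection in the statement. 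This deduction uses only the Proposition, not Theorem~A for $\mathfrak{psl}$, which is how circularity is avoided; it is also shorter than your sketch.

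Two further points in your proposal need more care. You assert that the $h_\alpha$ ``descend unchanged'' from $\tilde\aa$ to the quotient, but the restriction of the invariant form to $\tilde\aa\subset\mathfrak{sl}(2n|2n)$ is degenerate (its radical is $\mathbb C I_{4n}$), so the $h_\alpha$ are not even defined on $\tilde\aa$; only on the quotient is the form nondegenerate, and one has to match the resulting derivative conditions against those inherited from $I(\tilde\aa^*)$ rather than taking the match for granted. Finally, the coordinate verification you defer as ``bookkeeping'' is where the actual content is: with the Appendix normalization $\eps_i=\beps_i|_\aa$ and $\d_j=\bdelta_j|_\aa$ one has $\eps_i(I_{4n})=\d_j(I_{4n})=1$, so you must actually check that the listed generators --- in particular $\eps_i-2\d_k$ --- span the annihilator of $I_{4n}$ under the normalization being used; this does not follow by inspection and should be carried out, not deferred.
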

\black

 \black
%%%%%%%%%%%%%%%%%%%%%%%%%%%%%%%%%%%%%%%%%
%%%% surjectivity gl(m_1+m_2|2n_1+2n_2), gl(m_1|2n_1)xgl(m_2|2n_2) %%%%
%%%%%%%%%%%%%%%%%%%%%%%%%%%%%%%%%%%%%%%%%
\subsection{Case $\g=\mathfrak{gl}(2m+a|2n+b)$, $\k=\mathfrak{gl}(m|n) \oplus \mathfrak{gl}(m+a|n+b)$, $a,b \geq 0$.}

Here
\begin{align*}
    I(\aa^*)=\left\{f\in\mathbb C[\eps_1^2,\ldots,\eps_{m}^2,
\delta_1^2,\ldots,\delta_{n}^2]^{S_{m}\times S_{n}} \mid  
\left(\frac{\partial}{\partial \eps_1}+\frac{\partial}{\partial \delta_1} \right)f\in \left<\eps_1-\delta_1\right> \right\}
\end{align*}
(see Case \ref{gl_plus_gl} in the Appendix).

By \cite[Thm. 2]{SV}, $I(\aa^*)$ is generated by $\phi_{2k}=\eps_1^{2k}+\ldots+\eps_{m}^{2k}-\delta_1^{2k}-\dots-\delta_{n}^{2k}$, $k\in\mathbb Z_{\ge 1}$, which are the projections of  $\bar\phi_{2k}=\frac{1}{2}(\beps_1^{2k}+\ldots+\beps_{2m+a}^{2k}-\bdelta_1^{2k}-\dots-\bdelta_{2n+b}^{2k})$ from $I(\h^*)$.
Hence the restriction map from $I(\h^*)$ to $I(\aa^*)$ is surjective and so is $R_\theta$.
%%%%%%%%%%%%%%%%%%%%%%%%%%%%%%%%%%%%%%%%%%%%%%%%%%%
%%%% surjectivity osp(m_1+m_2|2n_1+2n_2), osp(m_1|2n_1)xosp(m_2|2n_2)  m_1 \ne m_2 %%%%
%%%%%%%%%%%%%%%%%%%%%%%%%%%%%%%%%%%%%%%%%%%%%%%%%%%
\subsection{Case $\g=\mathfrak{osp}(2m+a|4n+2b)$, $\mathfrak k=\mathfrak{osp}(m|2n)\oplus \mathfrak{osp}(m+a|2n+2b)$, $a\ge 1$, $b\ge 0$.}
Here
$$I(\aa^*)=\left\{f\in\mathbb C[\eps_1^2,\ldots,\eps_{m}^2,\delta_1^2\ldots\delta_{n}^2]^{S_{m}\times S_n}\mid  \left(\frac{2\partial}{\partial\eps_1}+\frac{\partial}{\partial\delta_1} \right)f\in \left<\eps_1-\delta_1\right> \right\}$$
(see Case \ref{osp_plus_osp} in the Appendix).
Similarly to the case in Section \ref{surj_gl_osp}, $I(\aa^*)$ is generated by $\phi_{2k}=\eps_1^{2k}+\ldots+\eps_{m}^{2k}-2\delta_1^{2k}-\dots-2\delta_{n}^{2k}$, $k\in\mathbb Z_{\ge 1}$, which are the projections of $\bar\phi_{2k}=\beps_1^{2k}+\ldots+\beps_{2m+a}^{2k}-\bdelta_1^{2k}-\dots-\bdelta_{2n+b}^{2k}$ in $I(\h^*)$.
Hence the restriction map from $I(\h^*)$ to $I(\aa^*)$ is surjective and so is $R_\theta$.

%%%%%%%%%%%%%%%%%%%%%%%%%%%%%%%%%%%%%%%%%%%%%%%%%%
%%%% surjectivity osp(m_1+m_2|2n_1+2n_2), osp(m_1|2n_1)xosp(m_2|2n_2)  m_1=m_2  %%%%
%%%%%%%%%%%%%%%%%%%%%%%%%%%%%%%%%%%%%%%%%%%%%%%%%%
\subsection{Case $\g=\mathfrak{osp}(2m|4n+2b), \mathfrak k=\mathfrak{osp}(m|2n)\oplus \mathfrak{osp}(m|2n+2b)$, $b\ge 0$.}
\label{sec:hard D-Case}
This case is also part of Case \ref{osp_plus_osp}, described in the Appendix. However, here the root $\eps_i$ is a singular root.
The Weyl group $W$ acts on $\eps_1,\ldots,\eps_m$ as a group of type $D$ and we have the additional condition that $\left(\frac{\partial}{\partial\eps_1}\right)^k f\in \left<\eps_1\right>$ for $k=1,3\ldots,2b-1$. Thus
$I(\aa^*)= I_1\left(\mathfrak{a}^{*}\right)\oplus I_2 \left(\mathfrak{a}^{*}\right)$ where

\begin{align*}
I_1\left(\mathfrak{a}^{*}\right)=
 & \left\{ f\in\mathbb{C}\left[\eps_1^2,\ldots,\eps_m^2,\d_1^2,\ldots,\d_n^2\right]^{S_m\times S_n}\mid \left(\frac{2\partial}{\partial \eps_1}+\frac{\partial}{\partial \d_1}\right)f\in\left\langle \eps_1-\d_1\right\rangle \right\} ,\\
I_2 \left(\mathfrak{a}^{*}\right)= 
& \left\{ f\in\left(\eps_1\cdots \eps_m\right)^{2b+1}\mathbb{C}\left[\eps_1^2,\ldots,\eps_m^2,\d_1^2,\ldots,\d_n^2\right]^{S_{m}\times S_n}\mid \left(\frac{2\partial}{\partial \eps_1}+\frac{\partial}{\partial \d_1}\right)f\in\left\langle \eps_1-\delta_1\right\rangle \right\} .
\end{align*}

We prove surjectivity  by showing that the dimensions of
$S(\mathfrak{p}^{*})_{d}^{\mathfrak{k}}$ and $I(\mathfrak{a}^{*})_{d}$
are equal for any degree $d$. We express these dimensions in term
of certain partitions: a partitions $\lambda$ of $d$ is called \emph{regular}
if it is contained in the fat $\left(m|2n \right)$-hook and either
\begin{itemize}
\item All the parts $\lambda_i$ are even; or
\item $\lambda_1,\ldots,\lambda_m$ are odd, $\lambda_m\ge2n+2b+1$
and $\lambda_i$ is even for $i\ge m+1$.
\end{itemize}
In the first case, we call $\lambda$ an \emph{even regular} partition, and in the second case, an \emph{odd regular} partition.
Note that the diagram of an odd regular partition must contain the $m\times (2n+2b+1)$ rectangle.
\begin{example}
Let $m=2$, $n_1=2$ and $b=1$. Then the following paritions
are regular:
\usetikzlibrary{math}  
$$
\begin{tikzpicture}[scale=0.40]
\tikzmath{\m=1; \n=3; \tail = 2; \shift =8;  \m1=2; \n1=2; }
\tikzmath{\i1 = 4; \i2=7; \i3=3; \j1 = 5; \j2=4;\j3=0;}
\draw[step=1cm,black,thick] (0,0) grid (\i1,-1);
\draw[step=1cm,black,thick] (0,-\m1+1) grid (2,-\m1+1-\j1);
%hook drawing
\draw [ultra thick] (0,0) -- (2*\n1+\tail,0);
\draw [ultra thick] (0,0) -- (0,-\m1-5);
\draw [ultra thick] (2*\n1,-\m1) -- (2*\n1,-\m1-5);
\draw [ultra thick] (2*\n1,-\m1) -- (2*\n1+\tail,-\m1);
% second partition 
\tikzmath{\i1 = 7; \i2=7; \i3=3; \j1 = 5; \j2=4;\j3=0;}
\draw[step=1cm,black,thick] (0+\shift,0) grid (\i1+\shift,-1);
\draw[step=1cm,black,thick] (0+\shift,-1) grid (\i2+\shift,-2);
\draw[step=1cm,black,thick] (0+\shift,-\m1) grid (2+\shift,-\m1-\j1);
%hook drawing
\draw [ultra thick] (0+\shift,0) -- (\i1+\tail+\shift,0);
\draw [ultra thick] (0+\shift,0) -- (0+\shift,-\m1-5);
\draw [ultra thick] (2*\n1+\shift,-\m1) -- (2*\n1+\shift,-\m1-5);
\draw [ultra thick] (2*\n1+\shift,-\m1) -- (\i1+\tail+\shift,-\m1);
\end{tikzpicture}$$
The following partition is not regular (but is regular for $b =0$):
\usetikzlibrary{math} 
$$
\begin{tikzpicture}[scale=0.40]
\tikzmath{\m=1; \n=3;
\tail = 2; \shift =0;  \m1=2; \n1=2; }
%%%%%%%%%%%%%%% second partition %%%%%%%%%%%%%%%%%%
\tikzmath{\i1 = 7; \i2=5; \i3=3; \j1 = 5; \j2=4;\j3=0;}
\draw[step=1cm,black,thick] (0+\shift,0) grid (\i1+\shift,-1);
\draw[step=1cm,black,thick] (0+\shift,-1) grid (\i2+\shift,-2);
\draw[step=1cm,black,thick] (0+\shift,-\m1) grid (2+\shift,-\m1-\j1);
%hook drawing
\draw [ultra thick] (0+\shift,0) -- (\i1+\tail+\shift,0);
\draw [ultra thick] (0+\shift,0) -- (0+\shift,-\m1-5);
\draw [ultra thick] (2*\n1+\shift,-\m1) -- (2*\n1+\shift,-\m1-5);
\draw [ultra thick] (2*\n1+\shift,-\m1) -- (\i1+\tail+\shift,-\m1);
\end{tikzpicture}$$
\end{example}

\begin{lemma}
The dimension of $S(\mathfrak{p}^*)_d^{\mathfrak k}$ is at least the numbers of regular partitions whose parts sum to $d$.
\end{lemma}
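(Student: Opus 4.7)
The plan is to identify $\p$ explicitly as a $\k$-module, apply super Howe duality to decompose $S^d(\p^*)$ into $\mathfrak{gl}$-isotypic components, and then exhibit an $\mathfrak{osp}$-invariant in each component indexed by a regular partition.

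First, under the standard decomposition $\g = \mathfrak{osp}(V_1 \oplus V_2)$ with $V_1 = \mathbb{C}^{m|2n}$ and $V_2 = \mathbb{C}^{m|2n+2b}$, one obtains the $\k$-module isomorphism $\p \cong V_1 \otimes V_2$, where $\k = \mathfrak{osp}(V_1) \oplus \mathfrak{osp}(V_2)$ acts by the natural outer tensor product.  Super $(\mathfrak{gl}(V_1), \mathfrak{gl}(V_2))$-Howe duality then yields
\[
S^d(V_1 \otimes V_2) \;\cong\; \bigoplus_{\lambda \vdash d}\, L^\lambda(V_1) \otimes L^\lambda(V_2),
\]
where $\lambda$ runs over partitions of $d$ in the intersection of the $(m|2n)$- and $(m|2n+2b)$-hooks (equivalently, $\lambda_{m+1}\le 2n$), and $L^\lambda(V_i)$ denotes the irreducible super Schur functor.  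Taking $\k$-invariants gives
\[
\dim S^d(\p^*)^\k \;=\; \sum_{\lambda} \dim L^\lambda(V_1)^{\mathfrak{osp}(V_1)} \cdot \dim L^\lambda(V_2)^{\mathfrak{osp}(V_2)}.
\]
It therefore suffices to exhibit, for every regular $\lambda\vdash d$, a nonzero $\mathfrak{osp}(V_i)$-invariant inside each $L^\lambda(V_i)$, since distinct $\lambda$ contribute to distinct isotypic summands and the resulting invariants are automatically linearly independent.

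For an even regular $\lambda$ (all parts even, fitting in the $(m|2n)$-hook), the invariant in $L^\lambda(V_i)$ arises from iterated contractions with the supersymmetric bilinear form on $V_i$; this is the super analogue of the classical fact that $L^\lambda(\mathbb{C}^N)$ carries an $O(N)$-invariant iff all parts of $\lambda$ are even.  For an odd regular $\lambda$, I would write $\lambda = \rho + \mu$, where $\rho$ is the rectangle of $m$ rows of length $2n+2b+1$ and $\mu$ is a partition with all even parts fitting in the $(m|2n)$-hook; the invariant in $L^\lambda(V_i)$ is then built as the product of a distinguished ``Berezinian-type'' invariant attached to $\rho$ with an even-type invariant attached to $\mu$.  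The hypothesis $\lambda_m \ge 2n+2b+1$ is precisely what makes the rectangle $\rho$ available, and the parity condition on the first $m$ rows of $\lambda$ matches the parity of the rectangle.

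The main obstacle is the odd regular case: one must verify that $L^\rho(V_i)$ genuinely supports an $\mathfrak{osp}(V_i)$-invariant on \emph{both} sides, and that multiplication by the $\mu$-invariant preserves this.  The critical height $2n+2b+1$ is dictated by $V_2 = \mathbb{C}^{m|2n+2b}$, where $\rho$ sits exactly at the shape at which a Berezinian-style character of $\mathfrak{gl}(V_2)$ becomes $\mathfrak{osp}(V_2)$-invariant; on the $V_1 = \mathbb{C}^{m|2n}$ side the rectangle extends beyond the hook, but a parallel super-Pfaffian / residual-Berezinian construction still produces a nonzero $\mathfrak{osp}(V_1)$-invariant.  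Writing these invariants down explicitly and verifying their non-vanishing is the technical heart of the argument.
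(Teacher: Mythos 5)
Your set-up (identification $\p^*\cong \mathbb{C}^{m|2n}\otimes\mathbb{C}^{m|2n+2b}$, super Howe duality, reduction to exhibiting an $\mathfrak{osp}$-invariant in each $V^\lambda$) is exactly what the paper does, and the even-regular case is handled in essentially the same way (the paper invokes \cite[Prop.\ 4.6]{SS} where you sketch a contraction argument). The odd-regular case, which you yourself flag as the ``technical heart,'' is where the proposal has a genuine gap, and the route you sketch does not quite work.

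Concretely, your decomposition $\lambda=\rho+\mu$ with $\rho=\bigl((2n+2b+1)^m\bigr)$ and $\mu$ a partition with even parts in the $(m|2n)$-hook is not well-defined: if $\lambda_m=2n+2b+1$ and $\lambda_{m+1}=2n$, then $\lambda_m-(2n+2b+1)=0<\lambda_{m+1}$, so $\lambda-\rho$ (row-wise) is not a partition. The shape $\lambda/\rho$ consists of two disconnected regions (an ``arm'' to the right of the rectangle in rows $1,\dots,m$ and a ``leg'' below it in columns $1,\dots,2n$), not a single partition. Even granting some decomposition, the proposed invariant ``product of a $\rho$-invariant and a $\mu$-invariant'' would naturally live in $L^\rho\otimes L^\mu$, and projecting it to the $L^\lambda$-summand and verifying nonvanishing requires a Littlewood--Richardson argument that is not supplied. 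The paper sidesteps both problems by a cleaner observation: the super-trace (Berezinian) character is trivial on $\mathfrak{osp}\subset\mathfrak{sl}$, so $V^\mu_{m|2n}\cong V^\lambda_{m|2n}\otimes\mathfrak{str}_{m|2n}$ \emph{as $\mathfrak{osp}(m|2n)$-modules}, where $\lambda$ is the even-regular partition obtained from the odd-regular $\mu$ by removing a box from each of the first $m$ rows and adding a box to each of the first $2n$ columns (the hypothesis $\mu_m\ge 2n+2b+1$ guarantees this is a legal Young-diagram operation), and similarly on the $m|2n+2b$ side. This reduces the odd case to the even case at the level of $\mathfrak{osp}$-modules, with no need for an explicit Berezinian-type invariant or a product decomposition.
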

\begin{proof}
In this case $\mathfrak{p^{*}}$ is isomorphic to $\mathbb{C}^{m|2n }\otimes\mathbb{C}^{m|2n+2b }$
as $\mathfrak{k}$-modules. We view $\mathbb{C}^{m|2n }\otimes\mathbb{C}^{m|2n+2b }$
as a $\mathfrak{g}':=\mathfrak{gl}(m|2n )\oplus\mathfrak{gl}(m|2n+2b )$-module
which was restricted to $\mathfrak{k}$. By \cite[Thm. 3.2]{CW},
we have the \red  following \black  decomposition of $\mathfrak{g}'$-modules
\[
S \left(\mathbb C^{m|2n}\otimes\mathbb{C}^{m|2n+2b}\right)_d\cong\bigoplus_{\lambda}V_{m|2n}^{\lambda}\otimes V_{m|2n+2b}^{\lambda},
\]
where the sum is over all paritions $\lambda$ of $d$ satisfying
$\lambda_{m+1}\le 2n \le 2n+2b $. Here the highest weight of the
simple modules $V_{m|2n}^{\lambda}$ and $V_{m|2n+2b}^{\lambda}$ is 
\[
\lambda=\lambda_1 \varepsilon_1 +\ldots\lambda_{m}\varepsilon_{m}+\left\langle \lambda'_1 -m\right\rangle \d_1 +\ldots+\left\langle \lambda'_{2n }-m\right\rangle \d_{2n },
\]
 where $\left\langle r\right\rangle :=\max\left\{ 0,r\right\} $ (for
$V_{m|2n+2b }^{\lambda}$ there is zero multiple of $\delta_{2n +1}+\ldots+\delta_{2n+2b }$).

Suppose that $\lambda$ is an even regular partition. 
 
By \cite[Prop. 4.6]{SS}\footnote{In the notation of \cite{SS}, the partitions do not have to be even but the correspondence between partions to weights is ``doubled'',
see \cite[(32)]{SS}}, 
the dimension of the invariants under  $\mathfrak{osp}\left(m|2n\right)$ (resp. $\mathfrak{osp}\left(m|2n+b\right)$) in 
$V_{m|2n}^{\lambda}$ (resp. $V_{m|2n+2b}^{\lambda}$) is one.
Hence $V_{m|2n }^{\lambda}\otimes V_{m|2n+b }^{\lambda}$ contains
a nonzero $\mathfrak{k}$-fixed vector.

We are left to construct a nonzero $\mathfrak{k}$-fixed vector for
every odd regular partition $\mu$. Let $\mathfrak{str}_{m|2n}$ and $\mathfrak{str}_{m|2n+2b}$ 
be the super-trace module over $\mathfrak{gl}\left(m|2n\right)$ and $\mathfrak{gl}\left(m|2n+2b\right)$, respectively.
Then 
\begin{align*}
    V_{m|2n}^{\lambda}\otimes\mathfrak{str}_{m|2n}\cong V_{m|2n}^{\mu},\quad
    V_{m|2n+2b}^{\lambda}\otimes\mathfrak{str}_{m|2n+2b}\cong V_{m|2n+2b}^{\mu}.
\end{align*}
where $\lambda$ is the partition obtained from $\mu$ by removing
a box from the first $n$-rows and adding a box to the first $2n$ (resp. $2n+2b$)
columns. Note that this can be done since $\lambda_{m}\ge2n+2b +1\ge2n +1$). 
Moreover,
tensoring with the super-trace module yields an isomorphic module over $\mathfrak{osp}\left(m|2n\right)$ (resp. $\mathfrak{osp}\left(m|2n+2b\right)$)
and $\lambda$ is an even regular partition. Hence $$\dim\left(V_{m|2n}^{\mu}\right)^{\mathfrak{osp}\left(m|2n\right)}=\dim\left(V_{m|2n+2b}^{\mu}\right)^{\mathfrak{osp}\left(m|2n+2b\right)}=1,$$
and $V_{m|2n}^{\mu}\otimes V_{m|2n+2b}^{\mu}$ contains a $\mathfrak{k}$-invariant
vector as desired.
\end{proof}

\begin{lemma}
\label{lem:upper bound on dimension}
The dimension of $I(\aa^*)_d$ is at most the \red  number \black 
of regular partitions whose parts sum to $d$.
\end{lemma}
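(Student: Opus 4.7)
The strategy is to use the direct sum decomposition $I(\aa^*)_d = I_1(\aa^*)_d \oplus I_2(\aa^*)_d$ and bound each summand separately by the number of even, respectively odd, regular partitions of $d$.

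For $I_1(\aa^*)$, every element depends only on the squares $\eps_i^2, \d_j^2$. The substitution $x_i = \eps_i^2,\ y_j = \d_j^2$ identifies $I_1(\aa^*)$ with the Sergeev--Veselov ring of $S_m \times S_n$-invariant polynomials $\tilde h(x,y)$ satisfying $(2\partial_{x_1} + \partial_{y_1})\tilde h \in \langle x_1 - y_1\rangle$, with degrees doubled. By \cite[Thm.~2]{SV}, the dimension of the degree-$e$ part of this ring equals the number of partitions of $e$ fitting in the $(m|n)$-hook. Hence $\dim I_1(\aa^*)_{2e}$ equals that count and $\dim I_1(\aa^*)_{2e+1} = 0$. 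The doubling map $\mu \mapsto 2\mu$ is a bijection between partitions of $e$ in the $(m|n)$-hook and even regular partitions of $2e$, yielding the desired bound for $I_1$.

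For $I_2(\aa^*)$, any element factors uniquely as $f = (\eps_1 \cdots \eps_m)^{2b+1} h$ with $h \in \mathbb C[\eps_1^2,\ldots,\d_n^2]^{S_m \times S_n}$, so $W$-invariance and the derivative conditions in $\eps_i$ alone are automatic. A direct calculation of $(2\partial_{\eps_1} + \partial_{\d_1})f$ at $\eps_1 = \d_1$ followed by the substitution $x_i = \eps_i^2,\ y_j = \d_j^2$ shows the remaining derivative condition becomes the twisted condition
$$
(2b+1)\tilde h + y_1(2\partial_{x_1} + \partial_{y_1})\tilde h \in \langle x_1 - y_1 \rangle
$$
on $\tilde h(x,y) = h(\eps,\d)$. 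I would then show that the dimension of the space of such $\tilde h$ of degree $e$ in $(x,y)$ is at most the number of partitions of $e$ in the $(m|n)$-hook that contain the $m \times n$ rectangle. Granted this bound, the map $\mu \mapsto (2\mu_1 + 2b+1, \ldots, 2\mu_m + 2b+1, 2\mu_{m+1}, 2\mu_{m+2}, \ldots)$ is a bijection between such rectangle-containing partitions of $e = (d - m(2b+1))/2$ and odd regular partitions of $d$, completing the count.

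The main obstacle is the dimension bound for the twisted algebra governing $I_2$. Unlike the Sergeev--Veselov condition, the extra inhomogeneous term $(2b+1)\tilde h$ breaks the purely-differential structure exploited in \cite{SV}, so their argument does not apply verbatim. I would attack this either by constructing explicit generators indexed by rectangle-containing hook partitions, e.g.\ via a twist of super Jack polynomials mirroring the construction of the $\phi_{2k}$ for $I_1$, or by a Hilbert-series argument that reduces the twisted count to the untwisted Sergeev--Veselov count via the leading-order behavior of $\tilde h$ on $\{x_1 = y_1\}$. Once the twisted dimension bound is in place, the two combinatorial bijections and the matching of degrees are immediate from the definitions of even and odd regular partitions.
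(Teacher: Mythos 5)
Your decomposition of $I(\aa^*)$ into $I_1 \oplus I_2$ and the treatment of $I_1$ via the substitution $x_i=\eps_i^2,\ y_j=\d_j^2$ to invoke \cite[Thm.~2]{SV} both match the paper, as does your derivation of the twisted condition
$(2b+1)\tilde h + y_1(2\partial_{x_1}+\partial_{y_1})\tilde h \in \langle x_1-y_1\rangle$
for the factor $h$ of an element of $I_2$ (one easily checks this is exactly what falls out after differentiating $f=(\eps_1\cdots\eps_m)^{2b+1}h$). The bijection $\mu\mapsto(2\mu_1+2b+1,\ldots,2\mu_m+2b+1,2\mu_{m+1},\ldots)$ between $(m|n)$-hook partitions containing the $m\times n$ rectangle and odd regular partitions is also correct.

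However, there is a genuine gap: the bound $\dim I_2(\aa^*)_d \le \#\{\text{odd regular partitions of }d\}$ rests entirely on the claimed dimension bound for the twisted algebra, and you do not prove it -- you only name two strategies you ``would attack'' it with. This is precisely the hard step, and neither sketch is routine. The twisted condition is not of Sergeev--Veselov type (the inhomogeneous term $(2b+1)\tilde h$ ruins the differential structure their proof exploits), so the Hilbert-series reduction and the ``twisted Jack polynomial'' idea are nontrivial open-ended proposals, not arguments. The paper avoids this entirely by working directly with $f\in I_2(\aa^*)_d$ in the original $(\eps,\d)$ variables and a carefully chosen lexicographic order $\eps_1>\cdots>\eps_{m-1}>\d_1>\cdots>\d_n>\eps_m$: applying the derivative conditions one $\d_i$ at a time shows the leading monomial has $\d$-exponents $\ge 2$ and $\eps_{m-1}$-exponent $\ge \eps_m$-exponent $+\ 2n$, forcing it to encode an odd regular partition; distinctness of leading monomials then gives the bound. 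You should supply the analogue of this leading-term analysis (or a worked-out version of one of your sketched routes) before the lemma can be considered proved.
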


To prove the lemma, we show that the leading term of an $f\in I(\aa^*)_d$ corresponds to a regular partition. 
However, the order on monomials and the correspondence to partitions is not the standard one. We demonstrate the idea of the proof in the following example.
\begin{example}
\label{exa:monomials to partitions}Let $m=3$, $n=3$ and $b=0$.
We take the order on monomials corresponding to the lexical order
on $x_1>x_2>y_1>y_2>y_3>x_2$. The monomial $x_1^{11}x_2^{11}y_1^{10}y_2^8 y_3^4 x_3^3$
corresponds to the following regular partition
$$
\begin{tikzpicture}[scale=0.40]

\draw (3+3,-3-6) node {$(11,11|10,8,4|3)$};

\draw[step=1cm,yellow,thick] (0,0) grid (11,-1);
\draw[step=1cm,black,thick] (0,0) rectangle  node {$11$} (11,-1);

\draw[step=1cm,yellow,thick] (0,-1) grid (11,-2);
\draw[step=1cm,black,thick] (0,-1) rectangle  node {$11$} (11,-2);

\draw[step=1cm,yellow,thick] (2*3,-3+1) grid (2*3+3,-3);
\draw[step=1cm,black,thick] (2*3,-3+1) rectangle  node {$3$} (2*3+3,-3);

\draw[step=1cm,yellow,thick] (0,-3+1) grid (2,-3+1-5);
\draw[step=1cm,black,thick] (0,-3+1) rectangle  node {$10$} (2,-3+1-5);

\draw[step=1cm,yellow,thick] (2,-3+1) grid (4,-3+1-4);
\draw[step=1cm,black,thick] (2,-3+1) rectangle  node {$8$} (4,-3+1-4);

\draw[step=1cm,yellow,thick] (4,-3+1) grid (6,-3+1-2);
\draw[step=1cm,black,thick] (4,-3+1) rectangle  node {$4$} (6,-3+1-2);
%hook drawing
\draw [ultra thick] (0,0) -- (2*3+6,0);
\draw [ultra thick] (0,0) -- (0,-3-5);
\draw [ultra thick] (2*3,-3) -- (2*3,-3-5);
\draw [ultra thick] (2*3,-3) -- (2*3+6,-3);
\end{tikzpicture}
$$
The monomial $x_1 ^{11}x_2 ^{7}y_1 ^{10}y_2 ^{8}y_{3}^{0}x_{3}^{3}$
supposedly corresponds to the following shape but we show that it
can not be a leading monomial of an element in $I(\mathfrak{a}^{*})$.
$$
\begin{tikzpicture}[scale=0.40]

\draw[step=1cm,pink,thick] (0,0) grid (11,-1);
\draw[step=1cm,black,thick] (0,0) rectangle  node {$11$} (11,-1);

\draw[step=1cm,pink,thick] (0,-1) grid (7,-2);
\draw[step=1cm,black,thick] (0,-1) rectangle  node {$7$} (7,-2);

\draw[step=1cm,pink,thick] (2*3,-3+1) grid (2*3+3,-3);
\draw[step=1cm,black,thick] (2*3,-3+1) rectangle  node {$3$} (2*3+3,-3);

\draw[step=1cm,pink,thick] (0,-3+1) grid (2,-3+1-5);
\draw[step=1cm,black,thick] (0,-3+1) rectangle  node {$10$} (2,-3+1-5);

\draw[step=1cm,pink,thick] (2,-3+1) grid (4,-3+1-4);
\draw[step=1cm,black,thick] (2,-3+1) rectangle  node {$8$} (4,-3+1-4);

%hook drawing
\draw [ultra thick] (0,0) -- (2*3+6,0);
\draw [ultra thick] (0,0) -- (0,-3-5);
\draw [ultra thick] (2*3,-3) -- (2*3,-3-5);
\draw [ultra thick] (2*3,-3) -- (2*3+6,-3);
\draw (3+3,-3-6) node {$(11,7|10,8,0|3)$};
\end{tikzpicture}$$ 
Our proof shows that in this case, the powers of $y_1 ,y_2 ,y_{3}$ are greater or equal to $2$ and that the power of $x_2$ is at
least the power of $x_3$ plus 6. 
\end{example}

\begin{proof}[Proof of Lemma \ref{lem:upper bound on dimension}]
By \cite[Prop. 2]{SV}, the number of even regular partitions of
total size $d$ is equal to the dimension of $I_1 \left(\aa^{*}\right)_{d}$.
We are left to show that the dimension of $I_2 (\aa^{*})_{d}$
is equal to the number of odd regular partitions of total size $d$. 

Let $f\in I_2 (\mathfrak{a}^{*})_{d}$ and denote by 
\[
\left(\lambda_1 ,\ldots,\lambda_{m-1}\ |\ \mu_1 ,\ldots,\mu_n\ |\ \lambda_m\right):=
x_1 ^{\lambda_1 }\cdots x_{m-1}^{\lambda_{m-1}}y_1 ^{\mu_1 }\cdots y_{n }^{\mu_n}x_{m}^{\lambda_m}
\]
 the leading monomial of $f$ with respect to the order that corresponds
to the lexical order 
\[
x_1 >\ldots>x_{m-1}>y_1 >\ldots>y_{n }>x_{m}.
\]
First we claim that $\mu_{i}\ge2$ for all $i$. Suppose that $\mu_{j}=0$
for some $j$. Since $\left(\frac{2\partial}{\partial x_{m}}+\frac{\partial}{\partial y_{j}}\right)f\mid_{x_{m}=y_{j}}=0$,
there should be another monomial in $f$ of the form 
\[
\left(\lambda_1 ,\ldots,\lambda_{m-1}\ |\ \mu_1 ,\ldots,r,\ldots,\mu_{n }\ |\ \lambda_{m}-r\right),
\]
 where $r\in2\mathbb{Z}_{\ge2}$ is the power of $x_{j}$. This contradicts
the maximality of the first monomial.

Second, we claim that $\lambda_{m-1}\ge\lambda_{m}+2n $. Indeed,
since $\left(\frac{2\partial}{\partial x_{m}}+\frac{\partial}{\partial y_{n }}\right)f\mid_{x_{m}=y_{n }}=0$
and by maximality of $\left(\lambda_1 ,\ldots,\lambda_{m-1} | \mu_1 ,\ldots,\mu_{n }|\lambda_{m}\right)$, we
get that 
\[
\left(\lambda_1 ,\ldots,\lambda_{m-1}\ |\ \mu_1 ,\ldots,\mu_{n }-2r_0 \ |\ \lambda_{m}+2r_0 \right)
\]
is also a monomial in $f$ for some $r_0 \ge1$. 
Next note that since
$\left(\frac{2\partial}{\partial x_{m}}+\frac{\partial}{\partial y_{n -1}}\right)f\mid_{x_m=y_{n-1}}=0$
and by maximality of $\left(\lambda_1 ,\ldots,\lambda_{m-1} | \mu_1 ,\ldots,\mu_n | \lambda_{m}\right),$we
get that 
\[
\left(\lambda_1 ,\ldots,\lambda_{m-1}\ |\ \mu_1 ,\ldots,\mu_{n-1}-2r_1,\mu_n -2r_0\  |\ \lambda_m+2r_0 +2r_1 \right)
\]
is also a monomial of $f$ for some $r_1 \ge1$. Repeating this argument
with $\left(\frac{2\partial}{\partial x_{m}}+\frac{\partial}{\partial y_i}\right)$ for $i=n_1 -3,\ldots,1$
we get that
\[
\left(\lambda_1 ,\ldots,\lambda_{m-1}\ |\ \mu_1 -2r_{n-1},\ldots,\mu_{n }-2r_0 \ |\ \lambda_{m}+2\sum_{i=0}^{n-1}r_{i}\right)
\]
is a monomial of $f$ for $r_0 ,\ldots,r_{n-1}\ge1$. Suppose by contradiction that $\lambda_{m-1}\le2n$. 
By the $S_{m}$-symmetry, 
\[
\left(\lambda_1 ,\ldots,\lambda_{m}+2\sum_{i=0}^{n-1}r_{i}\ |\ \mu_1 -2r_{n-1},\ldots,\mu_{n}-2r_0 \ |\ \lambda_{m-1}\right)
\]
is also a monomial of $f$ which is bigger than the leading monomial.
Thus, $\lambda_{m-1}\ge\lambda_{m}+2\sum_{i=0}^{n-1 }r_{i}\ge\lambda_{m}+2n $. 

Now, given that $\left(\lambda_1 ,\ldots,\lambda_{m-1}|\mu_1 ,\ldots,\mu_{n}|\lambda_{m}\right)$
is such that $\mu_1 ,\ldots,\mu_n\ge2$ and $\lambda_{m-1}\ge\lambda_{m}+2n$,
we can associate the following partition to $f$: the first $m$ rows
are given by $\left(\lambda_1 ,\ldots,\lambda_{m-1},\lambda_{m}+2n\right)$
and the following rows are given by the transpose of the partition
$\left(\frac{\mu_1 }{2}-1,\frac{\mu_1 }{2}-1,\ldots,\frac{\mu_n}{2}-1,\frac{\mu_n}{2}-1\right)$
(see Example \ref{exa:monomials to partitions}). 
\end{proof}

By \cite[0.6.5]{Serge2}, the elements of $I(\h^*)$ are either such that all variables have an even degree or divisible by $\prod_{1\le i\le 2m,1\le j\le 4m+2b} (\beps_i-\bdelta_j)$. Hence the image of the restriction from $I(\h^*)$ to $I(\aa^*)$ is generated be the elements.
\begin{align*}
    &\eps_1^{2k}+\ldots+\eps_{2m}^{2k}-\delta_1^{2k}-\ldots-\delta_{n}^{2k},\\
    &(\eps_1\cdots\eps_m)^{2b+1}
    \prod(\eps_i^2-\d_j^2)^2
    (\eps_1^{2k}+\ldots+\eps_{2m}^{2k})(\delta_1^{2l}+\ldots+\delta_{n}^{2l}),\quad k,l\in\mathbb Z_{\ge 0}.
\end{align*}
We note that the minimal-degree element in $I(\aa^*)$ for which the degree $\eps_1$ is odd has degree $m(2n+2b+1)$. However the minimal-degree element in the image of $I(\h^*)$ for which the degree $\eps_1$ is odd has degree $m(4n+2b+1)$. Hence the restriction map from $I(\h^*)$ to $I(\aa^*)$ is not surjective and $R_\theta$ is not surjective either.

%%%%%%%%%%%%%%%%%%%%%%%%%%%%%%%%%%%%%%%%%%%%%%%%%%%%%%%%%%
%%%%   surjectivity  osp(2m|2n) ,   gl(m|n)           %%%%
%%%%%%%%%%%%%%%%%%%%%%%%%%%%%%%%%%%%%%%%%%%%%%%%%%%%%%%%%%
\subsection{Case $\g=\mathfrak{osp}(2m|2n) , \mathfrak k=\mathfrak{gl}(m|n)$.}
Let $m_1:=\left\lfloor \frac{m}{2}\right\rfloor$, then
$$I(\aa^*)=\left\{f\in\mathbb C[\eps_1^2,\ldots,\eps_{m_1}^2,\delta_1^2\ldots\delta_n^2]^{S_{m_1}\times S_n}\mid  \left(\frac{\partial}{\partial\eps_1}+\frac{2\partial}{\partial\delta_1} \right)f\in \left<\eps_1-\delta_1\right> \right\}$$
(see Case \ref{osp_gl_odd} and Case \ref{osp_gl_even} in the Appendix).

Similarly to the case in Section \ref{surj_gl_osp}, $I(\aa^*)$ is generated by $\phi_{2k}=2\eps_1^{2k}+\ldots+2\eps_{m_1}^{2k}-\delta_1^{2k}-\dots-\delta_n^{2k}$ which are the projections of $\bar\phi_{2k}=\beps_1^{2k}+\ldots+\beps_{m}^{2k}-\bdelta_1^{2k}-\dots-\bdelta_n^{2k}$ in $I(\h^*)$.
Hence the restriction map from $I(\h^*)$ to $I(\aa^*)$ is surjective and so is $R_\theta$.

%%%%%%%%%%%%%%%%%%%%%%%%%%%%%%%%%%%%%%%%%%%%%%%%%%%%%%%%%%%
%%%%   surjectivity  F(4) ,   gosp(2|4)                %%%%
%%%%%%%%%%%%%%%%%%%%%%%%%%%%%%%%%%%%%%%%%%%%%%%%%%%%%%%%%%%
\subsection{Case $\g=F_4 , \mathfrak k=\mathfrak{gosp}(2|4)$.}

Here
$$I(\aa^*)=\left\{f\in\mathbb C[\eps_1^2,\eps_2^2,\d^2]^{S_2} \mid  \left(\frac{\partial}{\partial \eps_1}+\frac{3}{2}\frac{\partial}{\partial \d} \right)f\in \left<\eps_1-\d\right> \right\},$$
 (see Case \ref{f4_gosp24} in the Appendix).
By \cite[Prop. 2]{SV}, $\dim I(\aa^*)_{2d}$ is at most the number of partitions of $d$ which fit into the $(1|2)$-fat hook. 

Let us show that $\dim S(\p^*)^\k_{2d}$ is at least the number of such partitions.
The $\k$-module $\p^*$ is isomorphic to $V\oplus V^*$ where $V$ is the module described in \cite[Sec. 3.2]{SSS}. Hence $S(\p^*)\cong S(V)\otimes S(V)^*$. By \cite[Prop. 3.6]{SSS},  $S(V)_d=\oplus_{\lambda\in E_d} L(\lambda)$ where $E_d$ is a set of weights in bijection with $\{(q,2r,s)\mid q+2r+s=d;\ q,r\ge 0, s\ge 2 \}\cup \{d\}$. This set is in bijection with the set of partitions of $d$ in the $(1|2)$-fat hook by 
$$
\begin{tikzpicture}[scale=0.40]

%hook drawing
\draw [gray,ultra thick] (0,0) -- (6,0);
\draw [gray,ultra thick] (0,0) -- (0,-8);
\draw [gray,ultra thick] (2,-1) -- (2,-8);
\draw [gray,ultra thick] (2,-1) -- (6,-1);

\draw[step=1cm,white,thick] (0,0) grid (4,-1);
\draw[step=1cm,black,thick] (0,0) rectangle  node {$s$} (4,-1);

\draw[step=1cm,white,thick] (0,-1) grid (2,-3-1);
\draw[step=1cm,black,thick] (0,-1) rectangle  node {$2r$} (2,-3-1);

\draw[step=1cm,white,thick] (0,-4) grid (1,-7);
\draw[step=1cm,black,thick] (0,-4) rectangle  node {$q$} (1,-7);

%hook drawing
\draw [gray,ultra thick] (0+10,0) -- (14+10,0);
\draw [gray,ultra thick] (0+10,0) -- (0+10,-8);
\draw [gray,ultra thick] (2+10,-1) -- (2+10,-8);
\draw [gray,ultra thick] (2+10,-1) -- (14+10,-1);

\draw[step=1cm,white,thick] (0+10,0) grid (13+10,-1);
\draw[step=1cm,black,thick] (0+10,0) rectangle  node {$d$} (13+10,-1);

\end{tikzpicture}
$$
Applying Schur's Lemma to $S(V)_d\otimes S(V)_d^*\subseteq S(\p^*)_{2d}$, we obtain that the number of linearly independent invariant vectors is at least the number irreducible summands of $S(V)_d$, that is, the number of partitions of $d$ which fit into the $(1|2)$-fat hook. 

By \cite[Thm. 2]{SV}, $I(\aa^*)$ is generated by $\phi_{2k}=\eps_1^{2k}+\eps_2^{2k}-\frac{2}{3}\d^{2k}$, $k\in\mathbb Z_{\ge 0}$.  We note that $\dim I(\aa^*)_4=2$ whereas it follows from \cite[0.6.8]{Serge2} that $\dim I(\h^*)_4=1$ and so the restriction map from $I(\h^*)$ to $I(\aa^*)$ can not be surjective. Thus $R_\theta$ is not surjective in this case.

The image of the restriction from $I(\h^*)$ to $I(\aa^*)$ is generated by the images of the generators of $I(\h^*)$ (which are also listed in the next section), namely
$$\phi_2,\quad 4\phi_6-15\phi_4\phi_2 \ \text{  and } \  (\d^2-\eps_1^2)^2(\d^2-\eps_2^2)^2(\eps_1^{2i}+\eps_2^{2i})\d^{2j},  i,j\in\mathbb Z_{\ge 0}.$$
%%%%%%%%%%%%%%%%%%%%%%%%%%%%%%%%%%%%%%%%%%%%%%%%%%%
%%%%   surjectivity  F(4) ,   sl(1|4)          %%%%
%%%%%%%%%%%%%%%%%%%%%%%%%%%%%%%%%%%%%%%%%%%%%%%%%%%
\subsection{Case $\g=F_4 , \mathfrak k=\mathfrak{sl}(1|4)$.}
Here
\[I(\mathfrak{a}^{*})=
\left\{ f\in\mathbb{C}\left[\eps^2,\d^2\right]\ |
\ \left(\frac{\partial}{\partial\eps}+\frac{3\partial}{\partial\d}\right)^i f\in\left\langle \eps-\d\right\rangle,i=1,3 \right\} \]
(see Case \ref{f4_sl14} in the Appendix).

Let us show that $I(\h^*)$ surjects onto $I(\aa^*)$ and thus $R_\theta$ is also surjective. Similarly to the case in Section \ref{surjectivity case g=D(2,1,a)}, $\dim \mathbb{C}\left[\eps^2,\d^2\right]_{2d}=d+1$. 
Then the condition $D_{h_\alpha}f\in\langle\alpha\rangle$ imposes one linear relation. The condition $D_{h_\alpha}^3 f\in\langle\alpha\rangle$ imposes another linear relation for $d\ge 4$. 
We get that $\dim I(\aa^*)_2 =1$ and $\dim I(\aa^*)_{2d}=d-1$ for $d\ge 2$. 
We claim that $\dim I(\h^*)_2 \mid_\aa\ge 1$ and $\dim I(\h^*)_{2d}\mid_\aa\ge d-1$ for $d\ge 2$.

By \cite[0.6.8]{Serge2}, every element in $I(\h^*)$ is of the form $f=f_0+\prod(\bdelta\pm\beps_1\pm\beps_2\pm\beps_3)\cdot f_1$ where $f_0\in\mathbb C[L_2,L_6]$ and $f_1\in\mathbb C[\h^*]^W$, and 
\begin{align*}
L_2 &:=3(\beps_1^2+\beps_2^2+\beps_3^2)-\bar\d^2, \\
L_6 &:=\bar\d^6+\beps_1^6+\beps_2^6+\beps_3^6\\
& +(\beps_1 - \beps_2)^6+(\beps_2 - \beps_3)^6+(\beps_1 - \beps_3)^6+
(\beps_1 + \beps_2)^6+(\beps_2 + \beps_3)^6+(\beps_1 + \beps_3)^6\\
&-\frac{1}{64}\sum(\bar\d\pm\beps_1\pm\beps_2\pm\beps_3)^6.
\end{align*}

The restriction from $\h^*$ to $\aa^*$ is given by $\eps=\beps_1\mid_\aa$, $\d=\bdelta\mid_\aa$ and $\beps_2\mid_\aa=\beps_3\mid_\aa=0$. For $d\le 3$, we see that the restrictions of $L_2\in I(\h^*)_2$, $L_2^2\in I(\h^*)_4$ and $L_2^3,L_6\in I(\h^*)_6$ to $\aa$ are linearly independent and give the correct dimensions. 
We continue by induction on $d$, that is, assume that $\dim I(\h^*)_{2(d-1)}\mid_\aa\ge d-2$. Then $\dim\left(\left(L_2\cdot I(\h^*)_{2(d-1)}\right)\mid_\aa\right) \ge d-2$. 
Take $f_1\in\mathbb C[\h^*]_{2d-8}^W$ such that $f_1\mid_\aa$ is not divisible by $L_2\mid_\aa$.
Then $f=\prod(\bdelta\pm\beps_1\pm\beps_2\pm\beps_3)\cdot f_1$ is in $ I(\h)^*_{2d}\mid_\aa$ but not in $\left(L_2\cdot I(\h^*)_{2(d-1)}\right)\mid_\aa$. 
Thus, $\dim I(\h^*)_{2d}\mid_\aa\ge d-1$. 

The generators of $I(\aa^*)$ are the restrictions of the generators of $I(\h^*)$, namely of
$$3\eps^2-\d^2,\quad 3\eps^6-\d^6-16(\d^2-\eps^2)^3,\quad (\d^2-\eps^2)^4 \eps^{2i}\d^{2j}, i,j\in\mathbb Z_{\ge 0}.$$
%%%%%%%%%%%%%%%%%%%%%%%%%%%%%%%%%%%%%%%%%%%%%%%%%%%%%%%%%
%%%%   surjectivity  D(2,1,a) ,   osp(2|2)xso(2)     %%%%
%%%%%%%%%%%%%%%%%%%%%%%%%%%%%%%%%%%%%%%%%%%%%%%%%%%%%%%%%
\subsection{Case $\g=D(2,1,a) , \mathfrak k=\mathfrak{osp}(2|2)\oplus \mathfrak{so}(2)$.}\label{surjectivity case g=D(2,1,a)}
Here
$$I(\aa^*)=\left\{f\in\mathbb C[\eps^2,\d^2] \mid  \left((a+1)\frac{\partial}{\partial\eps}+\frac{\partial}{\partial\d} \right)f\in \left<\eps-\d\right> \right\},$$
(see Case \ref{d21a_osp22} in the Appendix).

To show surjectivity in this case, we first note that $\dim I(\aa^*)_{2d}=d$. Indeed, $\dim \mathbb C[\eps^2,\d^2]_{2d}=d+1$ with basis $ \eps^{2d},\eps^{2d-2}\d^2,\ldots,\d^{2d}$. The derivative condition is equivalent to $$ \left. \left((a+1)\frac{\partial}{\partial\eps}+\frac{\partial}{\partial\d} \right)f\right |_{\eps=\d}=0.$$ This gives one linear condition between the basis elements. 

Let us show that $\dim S(\p)^\k_{2d}\ge d$. The $\k$-module $\p^*$ is isomorphic to $V\oplus V^*$ where $S(V)_d$ is the described in \cite[Prop. 3.6]{SSS}: for $a\ne 0,-1$ such that $-\frac{1}{a}\notin\mathbb Q_\le 0$, $S(V)_d$ is a direct sum of $d$ irreducible modules. Hence $S(\p^*)\cong S(V)\otimes S(V)^*$. 
By Schur's lemma, the number of linearly independent invariant vectors is  the number of irreducible summands of  $S(V)_d$ which is $d$, as required.  Since $\dim S(\p)^\k_d=d$ for dense set of $a\in\mathbb C$, it follows that for an arbitrary $a$,  $\dim S(\p)^\k_d\ge d$.

\begin{proposition}
    The ring  $I(\aa^*)$ is generated by the deformed Newton sums $\phi_{2k}=\eps^{2k}-(a+1)\d^{2k}$, $k\ge 1$.
\end{proposition}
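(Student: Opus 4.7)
The plan is to combine a direct check that each $\phi_{2k}$ lies in $I(\aa^*)$ with the dimension count $\dim I(\aa^*)_{2d}=d$ established just above, and then to exhibit an explicit $d$-element basis of $I(\aa^*)_{2d}$ consisting of products of the $\phi_{2k}$'s. For the first step one computes directly
\[
\left((a+1)\frac{\partial}{\partial\eps}+\frac{\partial}{\partial\d}\right)\phi_{2k}
= 2k(a+1)\bigl(\eps^{2k-1}-\d^{2k-1}\bigr),
\]
which is visibly divisible by $\eps-\d$. Hence each $\phi_{2k}$ lies in $I(\aa^*)$, and the subring $R\subseteq I(\aa^*)$ generated by the $\phi_{2k}$ is well defined.

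Next, I would show by induction on $d$ that the $d$ elements $\phi_2^{d-k}\phi_{2k}\in R_{2d}$, $k=1,\ldots,d$, are linearly independent, using reduction modulo $\phi_2\in\mathbb C[\eps^2,\d^2]$. The base case $d=1$ is clear. For the induction step, suppose $\sum_{k=1}^d c_k\phi_2^{d-k}\phi_{2k}=0$. Reducing modulo $\phi_2$, every term with $k<d$ vanishes, leaving $c_d\phi_{2d}\equiv 0\pmod{\phi_2}$. Substituting $\eps^2=(a+1)\d^2$ gives
\[
\phi_{2d}\bigr|_{\phi_2=0}=(a+1)\bigl((a+1)^{d-1}-1\bigr)\d^{2d},
\]
which is nonzero at generic $a$, so $c_d=0$. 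The remaining relation $\sum_{k=1}^{d-1}c_k\phi_2^{d-k}\phi_{2k}=0$ is then divisible by the regular element $\phi_2$ of $\mathbb C[\eps^2,\d^2]$; dividing it out and applying the inductive hypothesis forces all $c_k=0$.

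Combining the two steps, $\{\phi_2^{d-k}\phi_{2k}\}_{k=1}^{d}$ is a basis of the $d$-dimensional space $I(\aa^*)_{2d}$, and since each basis element lies in $R$ we conclude $R=I(\aa^*)$, which is the desired generation statement. The main delicate point is the nonvanishing of $(a+1)^{d-1}-1$ used in the mod-$\phi_2$ reduction, which can fail at isolated values of $a$ where $a+1$ is a nontrivial root of unity. At these exceptional $a$ one either selects a different partition-indexed product basis of $R_{2d}$---for instance a suitable subfamily of the monomials $\phi_{2\lambda_1}\cdots\phi_{2\lambda_r}$ as $\lambda\vdash d$---and verifies independence by the same mod-$\phi_2$ reduction after a change of distinguished variable, or deduces the conclusion from the generic case by the constancy of $\dim I(\aa^*)_{2d}=d$ together with a standard specialization argument; this edge-case bookkeeping is the principal technical obstacle in the argument.
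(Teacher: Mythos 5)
Your argument --- checking each $\phi_{2k}\in I(\aa^*)$, inducting on $d$, using $\dim I(\aa^*)_{2d}=d$, and reducing modulo $\phi_2$ --- is in substance the same as the paper's proof; the paper phrases the inductive step as showing $\phi_{2d}\notin\phi_2\cdot I(\aa^*)_{2d-2}$ by comparing coefficients in the putative factorization, which is the same computation you carry out via $\phi_{2d}|_{\phi_2=0}=(a+1)\bigl((a+1)^{d-1}-1\bigr)\d^{2d}$.

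The gap is in the treatment of the exceptional parameters $(a+1)^{d-1}=1$, and neither of the two resolutions you sketch is viable. The ``standard specialization argument'' goes the wrong way: as $a$ varies, the rank of the span of the products $\phi_{2\lambda_1}\cdots\phi_{2\lambda_r}$ (whose coefficients are polynomials in $a$) is only upper semicontinuous, so knowing $\dim R_{2d}=d$ for generic $a$ gives an inequality in the wrong direction at the special values. The ``different partition-indexed basis'' fallback also cannot be made to work at $a=-2$: there $\phi_{2k}=\eps^{2k}+\d^{2k}$ are precisely the power sums in $\eps^2,\d^2$, so the subring $R$ they generate is $\mathbb C[\eps^2,\d^2]^{S_2}$, whose degree-$2d$ piece has dimension $\lfloor d/2\rfloor+1$; this is strictly less than $\dim I(\aa^*)_{2d}=d$ once $d\ge 3$, so no choice of products $\phi_{2\lambda_1}\cdots\phi_{2\lambda_r}$ can furnish a $d$-element basis. (The paper's own handling of this case is likewise incomplete: it writes $a=2$ where it evidently means $a=-2$ and then invokes \cite[Thm. 2]{SV}, which by the paper's own earlier remark applies only for $a\notin\mathbb Q_{\le -1}$ and hence excludes $a=-2$.) In short, you correctly pinned down $(a+1)^{d-1}=1$ as the obstruction, but the two escape routes offered do not close it, and at $a=-2$ the stated generation result in fact appears to fail.
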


\begin{proof}
    
For $a\notin \mathbb Q_{\le -1}$, this follows from \cite[Thm. 2]{SV}. 
We prove the proposition for general $a\ne -1$. 
Let $W_d$ be the subspace of $I(\aa^*)_{2d}$ generated by the deformed Newton sums.
We show that $\dim W_d\ge \dim I(\aa^*)_{2d}$ by induction on $d$. 
For $d=1$, the claim holds since $I(\aa^*)_{2}=\operatorname{span}\{\eps^2-(a+1)\d^2\}$. 
Suppose that $\dim I(\aa^*)_{2d-2}=d-1=\dim I(\aa^*)_{2d-2}$. 
Let $V_d:=(\eps^2-(a+1)\d^2)I(\aa^*)_{2d-2}\subseteq W_d$.
Then $\dim V_d=d-1$.
We show that $\dim W_d\ge d$ by showing that $\eps^{2d}-(a+1)\d^{2d}$ is in $W_d$ but not in $V_d$. 
Suppose otherwise, then $$\eps^{2d}-(a+1)\d^{2d}=(\eps^2-\d^2)(a_0\eps^{2d-2}+a_1\eps^{2d-4}\d^2+\ldots+a_d\d^{2d-2})$$
for some $a_0,\ldots, a_d\in\mathbb C$ such that $a_0\eps^{2d-2}+a_1\eps^{2d-4}\d^2+\ldots+a_d\d^{2d-2}\in I(\aa^*)_{2d-2}$. This equality is impossible unless $1=a_0=a_1=\ldots=a_d=-a-1$. Hence, for $a\ne 2$, the subspace of $I(\aa^*)_{2d}$ generated by the deformed Newton sums is at least $d$ and so the subspace is equal to $d$. For $a=2$, the proposition is known by \cite[Thm. 2]{SV}.

\end{proof}

Note that by \cite[0.6.6]{Serge2}, $\dim I(\h^*)_6=2$. 
Since $\dim I(\aa^*)_6=3$, $I(\h^*)$ does not surject onto $I(\aa^*)$ in this case. 
Hence $R_\theta$ is not surjective in this case.
The image of the restriction from $\dim I(\h^*)$ to $I(\aa^*)$ in this case is generated by $\phi_2$ and elements of the form $(\eps^2-\d^2)^2 \eps^{2i}\d^{2j}$, $i,j\in \mathbb Z$.

%%%%%%%%%%%%%%%%%%%%%%%%%%%%%%%%%%%%%%%%%%%%%%%%%%%%%%%%%%%
%%%%      surjectivity   G(3) ,   D(2,1,3)             %%%%
%%%%%%%%%%%%%%%%%%%%%%%%%%%%%%%%%%%%%%%%%%%%%%%%%%%%%%%%%%%
\subsection{Case $\g=G_3 , \mathfrak k=D(2,1,3)$.}
Here
$$I(\aa^*)=\mathbb C[\eps^2_1,\eps^2_2,\eps^2_3]^{S_3}/ \left< \eps_1 \eps_2 \eps_3=1\right>$$
(see Case \ref{g3_d21a} in the Appendix).

We claim that in this case $I(\h^*)$ surjects onto $I(\aa^*)$. Indeed, $I(\aa^*)$ is equal to the set of invariant polynomials under the Weyl group of the Lie algebra $G_2$. This set is generated by two algebraically independent elements of degree $2$ and $6$, (see for example \cite[3.7]{H}). Since $\eps_1^2+\eps_2^2+\eps_3^2,\ \eps_1^2\eps_2^2\eps_3^2 $, are algebraically independent and have the suitable degrees, they generate $I(\aa^*)$.
These elements are the images of $3\bdelta^2-2(\beps_1^2+\beps_2^2+\beps_3^2)$ and $-(\bdelta^2-\beps_1^2)(\bdelta^2-\beps_2^2)(\bdelta^2-\beps_3^2)$ which are in $I(\h^*)$ as $\bdelta\mid_{\aa}=0$. 
Since the generators of $I(\aa^*)$ are restrictions of elements in $I(\h^*)$,  the restriction map from $I(\h^*)$ to $I(\aa^*)$ is surjective.

%%%%%%%%%%%%%%%%%%%%%%%%%%%%%%%%%%%%%%%%%%%%%%%%%%%%%%%%%%%
%%%%   surjectivity  F(4) ,   sl(2)xD(2,1,2)           %%%%
%%%%%%%%%%%%%%%%%%%%%%%%%%%%%%%%%%%%%%%%%%%%%%%%%%%%%%%%%%%
\subsection{Case $\g=F_4, \mathfrak k=\mathfrak{sl}_2\oplus D(2,1;2)$.}
Here
\[I(\mathfrak{a}^{*})=\left\{ f\in\mathbb{C}\left[\eps_1^2,\eps_2^2,\eps_3^2\right]^{S_3}\ |\ \left(\frac{\partial}{\partial\eps_1}+\frac{\partial}{\partial\eps_2}+\frac{\partial}{\partial\eps_3}\right)f\in\left\langle \eps_1+\eps_2+\eps_3\right\rangle \right\},\]
(see Case \ref{f4_sl_d21a} in the Appendix).

Let us show that $I(\h^*)$ surjects onto $I(\aa^*)$.
This will in particular imply that $R_\theta$ is surjective.
Let $\hat{D}=\frac{\partial}{\partial\eps_1}+\frac{\partial}{\partial\eps_2}+\frac{\partial}{\partial\eps_3}-3\frac{\partial}{\partial\delta}$
and recall that $I(\h^*)$ consists of polynomials $\hat f$ in $\mathbb{C}\left[\beps_1^2,\beps_2^2,\beps_3^2,\bdelta^2\right]^{S_3}$ for which $ \hat D\hat f\in\left\langle \beps_1+\beps_2+\beps_3+\bdelta\right\rangle $ (here $\bdelta\mid_{\aa}=0$).
Let $f\in I(\mathfrak{a}^{*})_{2d}$. 
Then $\hat D f=p_1\cdot F$ for
some $F\in\mathbb{C}\left[\varepsilon_1,\varepsilon_2,\eps_3\right]_{2d-2}^{S_3}$. 

Let $p_n=\eps_1^n+\eps_2^n+\eps_3^n$. Recall that $p_3,p_2,p_1$ freely generate $\mathbb C\left[\eps_1,\eps_2,\eps_3\right]^{S_3}$ and that $p_6,p_4,p_2$ freely generate  $\mathbb C\left[\eps_1^2,\eps_2^2,\eps_3^2\right]^{S_3}$. Note that $\hat D p_n=np_{n-1}$.
Write 
\[\hat{f}:=f+\sum_{r_0+3r_6+2r_4+r_2=d}a_{\vec{r}}\delta^{2r_0}p_6^{r_6}p_4^{r_4}p_2^{r_2}\]
where $r_{0}\ge1$, $a_{\vec{r}}\in\mathbb{C}$. 
Then $\hat{f}\in I(\mathfrak{h}^{*})$ if and only if $\hat{D}\hat{f}\mid_{\delta=-p_1}=0$. This means that 
\[
\hat{D}\hat{f}\mid_{\delta=-p_1}=p_1F+\sum_{r_{0}+3r_6+2r_{4}+r_2=d}a_{\vec{r}}\hat{D}\left(\delta^{2r_{0}}p_6^{r_6}p_{4}^{r_{4}}p_2^{r_2}\right)\mid_{\delta=-p_1}=0.
\]
By the following technical lemma, one can find $a_{\vec r}$'s that will make the above expression zero for every $F\in \mathbb{C}\left[\eps_1,\eps_2,\eps_3\right]_{2d-2}^{S_3}$ which proves the existence of a preimage in $I(\h^*)$.

\begin{lemma}
The elements 
\[q_{\vec r}:=p_1^{-1}\cdot\hat D \left(\delta^{2r_0}p_6^{r_6}p_4^{r_4}p_2^{r_2}\right)\mid_{\delta=-p_1},\text{ where }r_0\ge1,r_0+3r_6+2r_4+r_2=d,\]
span $\mathbb{C}\left[\eps_1,\eps_2,\eps_3\right]_{2d-2}^{S_3}$.
\end{lemma}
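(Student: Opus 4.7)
The plan is to identify each $q_{\vec r}$ as the image of one fixed linear operator applied to a natural monomial basis, and then to reduce the spanning claim to bijectivity of that operator on a finite-dimensional space. First, writing $D_+ := \partial_{\eps_1} + \partial_{\eps_2} + \partial_{\eps_3}$, a one-line chain-rule computation gives, for any polynomial $F(\delta,\eps_1,\eps_2,\eps_3)$,
\[
(\hat D F)\big|_{\delta=-p_1}\ =\ D_+\bigl(F|_{\delta = -p_1}\bigr),
\]
since $\partial_{\eps_i} p_1 = 1$ makes the $-3\partial_\delta$ in $\hat D$ exactly the correction produced by the substitution. Combining this with $D_+(p_1^2 g) = 6 p_1 g + p_1^2 D_+ g$, each $q_{\vec r}$ rewrites as $q_{\vec r} = L(g_{\vec r})$, where
\[
L := 6\,\mathrm{id} + p_1 D_+ \quad\text{and}\quad g_{\vec r} := p_1^{2(r_0-1)} p_6^{r_6} p_4^{r_4} p_2^{r_2}.
\]
As $\vec r$ runs over the index set of the lemma, $g_{\vec r}$ sweeps out precisely the natural monomial basis of the degree $2d-2$ component $M_{2d-2}$ of $M := \mathbb{C}[p_1^2, p_2, p_4, p_6]$.

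Next I would identify this $M$ as a familiar subring of $R := \mathbb{C}[\eps_1,\eps_2,\eps_3]^{S_3} = \mathbb{C}[p_1,p_2,p_3]$. The involution $\eps_i \mapsto -\eps_i$ acts on $R$ by $p_k \mapsto (-1)^k p_k$, and each of $p_1^2, p_2, p_4, p_6$ is fixed, so $M \subseteq R^{\mathbb{Z}/2}$. Using Newton's identities one checks the two inclusions $p_1 p_3,\, p_3^2 \in M$, which together imply that $M$ contains every monomial $p_1^a p_2^b p_3^c$ with $a + c$ even. Since the weighted-degree condition $a + 2b + 3c = 2d - 2$ forces $a + c$ to be even, I get $M_{2d-2} = R_{2d-2}$, and the lemma reduces to showing that the endomorphism $L: R_{2d-2} \to R_{2d-2}$ is surjective.

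Being a linear map on a finite-dimensional space, it is enough to prove $L$ is injective. Suppose $Lg = 0$ for some nonzero $g \in R_{2d-2}$; since $p_1$ is prime in the UFD $R$, write $g = p_1^k h$ with $k$ maximal so that $p_1 \nmid h$. Using $D_+ p_1 = 3$, the equation $6g + p_1 D_+ g = 0$ collapses to
\[
(6 + 3k)\, h\ =\ -\,p_1\, D_+ h,
\]
and reducing modulo $p_1$ gives $(6+3k)\, h|_{p_1=0} = 0$; the coefficient $6 + 3k$ is nonzero, so $h|_{p_1=0} = 0$, contradicting the choice of $k$. Hence $L$ is bijective, so the $q_{\vec r}$ span $L(M_{2d-2}) = R_{2d-2}$. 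I expect the only step requiring any actual computation to be the identification $M = R^{\mathbb{Z}/2}$, through the two Newton-identity verifications that $p_1 p_3,\, p_3^2 \in \mathbb{C}[p_1^2,p_2,p_4,p_6]$; the injectivity argument is otherwise immediate.
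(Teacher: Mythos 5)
Your proof is correct, and it takes a genuinely different route from the paper. The paper's argument is a direct leading-monomial computation: after writing out $\hat D(\delta^{2r_0}p_6^{r_6}p_4^{r_4}p_2^{r_2})$ and using explicit Newton-identity expansions of $p_4,p_5,p_6$ in $p_1,p_2,p_3$, it shows that the leading term of $q_{\vec r}$ in a suitable lex order is $p_3^{2r_6+r_4}p_2^{r_2}p_1^{r_4+2r_0-2}$, and that every monomial of $\mathbb{C}[p_1,p_2,p_3]_{2d-2}$ is hit by some admissible $\vec r$. You instead factor $q_{\vec r}=L(g_{\vec r})$ for the fixed degree-preserving operator $L=6\,\mathrm{id}+p_1D_+$, observe that the $g_{\vec r}$ span $M_{2d-2}$ where $M=\mathbb{C}[p_1^2,p_2,p_4,p_6]$, verify $M_{2d-2}=R_{2d-2}$ via the inclusions $p_1p_3,p_3^2\in M$ (the only place Newton identities enter) together with the parity observation $a+c$ even in degree $2d-2$, and then prove $L$ injective on $R_{2d-2}$ by a clean $p_1$-divisibility argument: writing $g=p_1^kh$ with $p_1\nmid h$, the relation $Lg=0$ forces $(6+3k)h=-p_1D_+h$, hence $p_1\mid h$, a contradiction. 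I checked each step, including the chain-rule identity $(\hat DF)|_{\delta=-p_1}=D_+(F|_{\delta=-p_1})$ and the rewriting $q_{\vec r}=6g_{\vec r}+p_1D_+g_{\vec r}$; all are correct. This version is arguably more conceptual since it replaces the monomial-order bookkeeping with a single invertibility lemma, and it incidentally shows that the map $g_{\vec r}\mapsto q_{\vec r}$ is a linear isomorphism on the span, not merely a surjection. One small terminology slip: since $p_1^2,p_2,p_4,p_6$ are algebraically dependent in $\mathbb{C}[\eps_1,\eps_2,\eps_3]^{S_3}$, the $g_{\vec r}$ form a spanning set of $M_{2d-2}$, not a basis; your argument only uses spanning, so nothing is affected.
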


\begin{proof}
We show that every basis element $p_3^{n_3}p_2^{n_2}p_1^{n_1}$ in $\mathbb{C}\left[\varepsilon_1,\varepsilon_2,\varepsilon_3\right]_{2d-2}^{S_3}$
appears as a leading monomial of some $q_{\vec r}$. 
Indeed,
\begin{align*}
\hat{D}\left(\delta^{2r_0}p_6^{r_6}p_4^{r_4}p_2^{r_2}\right) & =
-6r_0\delta^{2r_0-1}p_6^{r_6}p_4^{r_4}p_2^{r_2}+6r_6\delta^{2r_0}p_6^{r_6-1}p_{5}p_4^{r_4}p_2^{r_2}\\
 & +4r_4\delta^{2r_0}p_6^{r_6}p_4^{r_4-1}p_3p_2^{r_2}+2r_2\delta^{2r_0}p_6^{r_6}p_4^{r_4}p_2^{r_2-1}p_1.
\end{align*}
So 
\begin{align*}
q_{\vec r } & =6r_0p_6^{r_6}p_4^{r_4}p_2^{r_2}p_1^{2r_0-2}+6r_6p_6^{r_6-1}p_5 p_4^{r_4}p_2^{r_2}p_1^{2r_0-1}\\
 & +4r_4p_6^{r_6}p_4^{r_4-1}p_3 p_2^{r_2}p_1^{2r_0-1}+2r_2p_6^{r_6}p_4^{r_4}p_2^{r_2-1}p_1^{2r_0}.
\end{align*}
The following relations imply that the leading monomial of $q_{\vec r}$ is $p_3^{2r_{6}+r_{4}}p_2^{r_2}p_1^{r_{4}+2r_{0}-2}$.
\begin{align*}p_{6}&=\frac{1}{3}p_3^2+p_3p_2p_1+\frac{1}{3}p_3p_1^{3}+\frac{1}{4}p_2^{3}-\frac{3}{4}p_2^2p_1^2-\frac{1}{4}p_2p_1^{4}+\frac{1}{12}p_1^{6}\\
p_{5}&=\frac{5}{6}p_3p_2+\frac{5}{6}p_3p_1^2-\frac{5}{6}p_2p_1^{3}+\frac{1}{6}p_1^{5}\\
p_{4}&=\frac{4}{3}p_3p_1+\frac{1}{2}p_2^2-p_1^2p_2+\frac{1}{6}p_1^{4}.
\end{align*}
Thus, given $p_3^{n_3}p_2^{n_2}p_1^{n_1}$ with $3n_3+2n_2+n_1=2d-2$, it appears as a leading monomial for $q_{\vec r}$ for $\vec r=(r_0,r_6,r_4,r_2)=( \left\lfloor \frac{n_1+2}{2}\right\rfloor,\left\lfloor \frac{n_3}{2}\right\rfloor,p(n_3),n_2)$, where $p(n_3)\in\{0,1\}$ is the parity of $n_3$.
\end{proof}

The ring $I(\aa^*)$ is generated by the images of the generators of $I(\h^*)$, namely
\begin{align*}
&3(\eps_1^2+\eps_2^2+\eps_3^2), \\
&\eps_1^6+\eps_2^6+\eps_3^6+\sum_{i<j}(\eps_i-\eps_j)^6-\frac{1}{32}\sum(\eps_1\pm\eps_2\pm\eps_3)^6,\\
&\prod(\eps_1\pm\eps_2\pm\eps_3)^2 (\eps_1^{2k}+\eps_2^{2k}+\eps_3^{2k}),\  k\in\mathbb Z_{\ge 1}.
\end{align*}
%%%%%%%%%%%%%%%%%%%%%%%%%%%%%%%%%%%%%%%%%
%%%%%%%%%%%%%%%%%%%%%%%%%%%%%%%%%%%%%%%%%
%%%%***********Appendix***************%%%%
%%%%%%%%%%%%%%%%%%%%%%%%%%%%%%%%%%%%%%%%%
%%%%%%%%%%%%%%%%%%%%%%%%%%%%%%%%%%%%%%%%%
\section{\red Appendix: \black  restricted roots systems for even symmetric pairs.}
In this appendix we list properties of restricted root systems for all cases that correspond to Iwasawa involutions. 
\subsection{Cartan Subspace}
We use the standard basis $\beps_1,\ldots,\beps_{\bar m},\bar\d_1,\ldots,\bar\d_{\bar n}$ for denoting the roots of $\g$. For $\g\ne D(2,1,a),F_4$, 
$$(\beps_i,\beps_j)=\d_{ij}=-(\bar\d_i,\bar\d_j),\quad (\beps_i,\bar\d_j)=0.$$
For $\g=D(2,1,a)$, we have
$(\beps_1,\beps_1)=-a-1$, $(\beps_2,\beps_2)=1$ and $(\beps_3,\beps_3)=a.$
For $\g=F_4$, we have
$(\beps_i,\beps_j)=2\d_{ij}$,\ $(\bdelta,\bdelta)=-6$ and $(\beps_i,\bdelta)=0$. Note that for  $\g=G_3$, we use the basis $\beps_1,\beps_2,\beps_3,\bdelta$ such that $\beps_1+\beps_2+\beps_3=0$, $(\beps_i,\beps_j)=-1$ for $i\ne j$ and $(\beps_i,\beps_i)=2$ (see for example \cite[10.9]{GK}).

We use the notation $\eps_1,\ldots,\eps_m,\d_1,\ldots,\d_n$ to denote a basis to $\aa^*$. The restriction function from $\h^*$ to $\aa^*$ is described in each case. 
We denote by $\bar e_1,\ldots,\bar e_{\bar m},\bar d_1,\ldots,\bar d_{\bar n}$ the dual basis to $\beps_1,\ldots,\beps_{\bar m},\bdelta_1,\ldots,\bdelta_{\bar n}$, and by $e_1,\ldots,e_m,d_1,\ldots,d_n$ the dual basis to $\eps_1,\ldots,\eps_m,\d_1,\dots,\d_n$.
%%%**********%%%%%%%%%
\subsection{Restricted Root Systems.}\label{sec:restricted root systems}

Below is the list of type of the restricted root system following \cite{SV} and the restriction function from $\h^*$ to $\aa^*$.
\vspace{4mm}
\hfill\break
\def\arraystretch{1.2}
\begin{center}
    
\begin{tabular}{c||c|c|c}
Case & $\g$ &$\k$ & Type \\
\hline
\hline 
  
  \rownumber\label{gl_osp} & $ \mathfrak{gl}(m|2n)$ & $ \mathfrak{osp}(m|2n)$ &$A(m-1|n-1)$\\
  \hline
  
  \rownumber \label{gl_plus_gl} & $ \mathfrak{gl}(2m+a|2n+b)$ & $ \mathfrak{gl}(m|n)\oplus \mathfrak{gl}(m+a|n+b)$ &$BC(m|n)$ \\
  &$a,b\ge 0$&\\
  \hline
  
  \rownumber\label{osp_plus_osp} & $ \mathfrak{osp}(2m+a|4n+2b)$ & $ \mathfrak{osp}(m|2n)\oplus \mathfrak{osp}(m+a|2n+2b)$ &$BC(m|n)$\\
  &$a,b\ge 0$&\\
  \hline

  \rownumber\label{osp_gl_odd} & $ \mathfrak{osp}(4m+2|2n)$ & $ \mathfrak{gl}(2m+1|n)$  &$BC(m|n)$ \\
  \hline

  \rownumber\label{osp_gl_even} & $ \mathfrak{osp}(4m|2n)$ & $ \mathfrak{gl}(2m|n)$  &$BC(m|n)$ \\
  \hline
  
  \rownumber\label{f4_gosp24}  & $ F_4$ & $ \mathfrak{gosp}(2|4)$  &$BC(2|1)$ \\
  \hline

  \rownumber \label{f4_sl14} & $ F_4$ & $ \mathfrak{sl}(1|4)$& $BC(1|1)$\\
  \hline
  
  \rownumber\label{d21a_osp22} & $ D(2,1,a)$ & $ \mathfrak{osp}(2|2)\oplus \mathfrak{so}(2)$ &$BC(1|1)$\\
  \hline

  \rownumber\label{g3_d21a} & $ G_3$ & $ D(2,1,3)$ &$G_2$ \\
  \hline

  \rownumber\label{f4_sl_d21a} & $ F_4$ & $ \mathfrak{sl}_2\oplus D(2,1;2)$ & exotic\\
  \hline

\end{tabular}
\end{center}

\vspace{4mm}
We describe in each case the restriction function from $\h^*$ to $\aa^*$. The standard basis elements which are not written are being restricted to zero. In the following table $1\le i\le m$ and $1\le j \le n$.
\vspace{4mm}
\hfill\break
\begin{center}
\begin{tabular}{c||c c}
\
Case & \multicolumn{2}{c}{Restriction from $\h^*$ to $\aa^*$}\\
\hline\hline
    
    \ref*{gl_osp}        & $\eps_i=\beps_i\mid_\aa$          &
    $\delta_j=\bdelta_j\mid_\aa=\bdelta_{n+j}\mid_\aa$\\
    
    \ref*{gl_plus_gl}    & $\eps_i=\beps_i\mid_\aa=-\beps_{m+i}\mid_\aa$    & $\delta_j=\bdelta_j\mid_\aa=-\bdelta_{n_1+j}\mid_\aa$ \\ 
    
    \ref*{osp_plus_osp}  &$\eps_i=\beps_i\mid_\aa$       &
    $\delta_j=\bdelta_j\mid_\aa=-\bdelta_{n+j}\mid_\aa$ \\
    
    \ref*{osp_gl_odd}  &  $\eps_i=\beps_i\mid_\aa=-\beps_{2m-i+2}\mid_\aa$   &
    $\delta_j=\bdelta_j\mid_\aa$    \\
    
    \ref*{osp_gl_even}  &  $\eps_i=\beps_i\mid_\aa=-\beps_{2m-i+1}\mid_\aa$   &
    $\delta_j=\bdelta_j\mid_\aa$    \\

    \ref*{f4_gosp24}  & $\eps_1=\frac{\beps_1+\beps_2}{2}\mid_\aa$,\  $\eps_2=\frac{\beps_1-\beps_2}{2}\mid_\aa$      &
    $\d=-\frac{\bdelta}{2}\mid_\aa$ \\
    
    \ref*{f4_sl14}  & $\eps=\frac{\beps_1}{2}\mid_\aa$        &
    $\delta=\frac{\bdelta}{2}\mid_\aa$ \\
    
    \ref*{d21a_osp22}  & $\eps=\beps_1\mid_\aa$,      &$\d=-\beps_2\mid_\aa$
     \\
    
    \ref*{g3_d21a}  & $\eps_1=\beps_1\mid_\aa$,    $\eps_2=\beps_2\mid_\aa$,    
 $\eps_3=\beps_3\mid_\aa$      &
     \\
    
    \ref*{f4_sl_d21a}  & $\eps_1=\beps_1\mid_\aa$,    $\eps_2=\beps_2\mid_\aa$,    
 $\eps_3=\beps_3\mid_\aa$      &
     \\
\end{tabular}
\end{center}
\hfill\break

\vspace{4mm}
We write below the multiplicity of each restricted root. When the root space is pure, we write a positive number for the dimension of an even space and a negative number for the dimension of an odd space. 
We let $k:=-\frac{(\d_j,\d_j)}{(\eps_i,\eps_i)}$ be the deformation parameter used in Section \ref{subsec:D_alpha} to compute $D_\alpha$.

\subsubsection{Multiplicities in type $A(m-1|n-1)$}

\black 
\hfill\break

\begin{center}
    \begin{tabular}{c||c|c|c|c}
        Case            & $\pm(\eps_i-\eps_j)$      &$\pm(\d_i-\d_j)$   &$\pm(\eps_i-\d_j)$& $k$\\ \hline\hline
        \ref*{gl_osp}    &$1$        & $4$       &$-2$   & $\frac{1}{2}$ \\ \hline
        
    \end{tabular}
\end{center}
\hfill\break
{
To compute $k=-\frac{(\d_j,\d_j)}{(\eps_i,\eps_i)}$, we note that the dual basis element to $\eps_i$ in $\aa$ is $\bar e_i$. The dual basis element to $\d_j$ in $\aa$ is $\frac{\bar d_j+\bar d_{n+j}}{2}$. Hence $(\eps_i,\eps_i)=(\bar e_i,\bar e_i)=1$ and $(\d_j,\d_j)=\left(\frac{\bar d_j+\bar d_{n+j}}{2},\frac{\bar d_j+\bar d_{n+j}}{2} \right)=-\frac{1}{2}$.}

\subsubsection{Multiplicities in type $BC(m|n)$}

\hfill\break
\begin{center}
    
    \begin{tabular}{c||c|c|c|c|c|c|c|c}
        Case            &$\pm\eps_i\pm\eps_j$ &$\pm\eps_i$     &$\pm2\eps_i$&$\pm\d_i\pm\d_j$ &$\pm\d_i$       &$\pm2\d_i$  &$\pm\eps_i\pm\d_j$&$k$\\ \hline\hline
        
        \ref*{gl_plus_gl}& $2$       &$(2a|2b)$  &$1$    &$2$    &$(2b|2a)$  & $1$   &$-2$  &$1$  \\ \hline
        
    \ref*{osp_plus_osp}  &$1$        &$(a|2b)$   &       &$4$    &$(4b|2a)$   & $3$   &$-2$    &$\frac{1}{2}$\\ \hline
    
        \ref*{osp_gl_odd}    & $4$       & $2$       &$1$    &$1$    &$-2$       & $1$   &$-2$ &$2$    \\ \hline
    
        \ref*{osp_gl_even}    & $4$       &       &$1$    &$1$    &       & $1$   &$-2$    &$2$\\ \hline
        
        \ref*{f4_gosp24} & $3$       &        & $1$      &       &       &$1$        &$-2$ &$\frac{3}{2}$   \\ \hline
        
        \ref*{f4_sl14}   &           &       & $5$       &       &        &  $1$     &$-4$ &$3$   \\ \hline
        
        \ref*{d21a_osp22} &       &           &  $1$  &       &           &   $1$    & $-2$  &$\frac{1}{a+1}$     \\ \hline

    \end{tabular}
\end{center}
\hfill\break
Note that in Case \ref*{f4_gosp24}, $m=2, n=1$ and in Cases  \ref*{f4_sl14} and \ref*{d21a_osp22}, $m=n=1$.
The computation of $k$ form  is done in a similar fashion as Case \ref*{gl_osp}.\black 
\subsubsection{Multiplicities in type $G_2$}
\hfill\break
\begin{center}
    
    \begin{tabular}{c||c|c}
        Case            &long root & short root     \\ \hline\hline
       
        \ref*{g3_d21a}   &$1$        &$(1|2)$          \\ \hline

    \end{tabular}
\end{center}
\hfill\break

\subsubsection{An exotic case}

\hfill\break
\begin{center}
    
    \begin{tabular}{c||c|c|c}
        Case            &$\pm\eps_i\pm\eps_j$ &$\pm\eps_i$     &$\frac{1}{2}\left(\pm\eps_1\pm\eps_2\pm\eps_3\right)$\\ \hline\hline

        \ref*{f4_sl_d21a}&$1$        &$1$     &$-2$    \\ \hline

    \end{tabular}
\end{center}
\hfill\break

In this case $(\eps_i,\eps_i)=1$ and $(\eps_i,\eps_j)=0$ for $i\ne j$.

%%%%*********************************%%%%%%%%*********************************%%%%%%%%*********************************%%%%%%%%*********************************%%%%

\subsection{Computation of $D_{h_\alpha}$}\label{subsec:D_alpha}
For Cases \ref{gl_osp}-\ref{g3_d21a}, we compute the derivative condition in the following manner. 
    Suppose that $\eps_1,\d_1\in\aa^*$ are such  that $(\eps_1,\eps_1)=1$, $(\d_1,\d_1)=-k$, $(\eps_1,\d_1)=0$ and $\alpha=\eps_1-\d_1$ is a former isotropic root. Let $e_1,d_1$ be the dual elements in $\aa$ to $\eps_1,\d_1$. Then $e_1+d_1\in\ker\a$ and $h_\alpha=e_1+\frac{1}{k}d_1\in(\ker\a)^\perp$.
    Hence the condition $D_{h_\a}f\in\ker\a$ becomes $\left(\frac{\partial}{\partial\eps_1}+\frac{\partial}{k\partial\d_1}\right)f\in\langle\eps_1-\d_1\rangle$.

    For Case \ref{f4_sl_d21a}, take $\a=\eps_1+\eps_2+\eps_3$. Then $\ker\a=\text{span}\{e_1-e_2,e_2-e_3\}$ and $h_\a=e_1+e_2+e_3\in(\ker\a)^\perp$. Hence the derivative condition becomes $(\frac{\partial}{\partial\eps_1}+\frac{\partial}{\partial\eps_2}+\frac{\partial}{\partial\eps_3})f\in\langle \eps_1+\eps_2+\eps_3\rangle$.

\subsection{Generators for $I(\aa^*)$}
\label{sec:generators table}
We list the generators in each case.    
\hfill\break
\begin{center}
    
\begin{tabular}{c||c }
\
Case & {Generators}\\
\hline\hline
    
    \ref*{gl_osp}        & $\eps_1^k+\ldots+\eps_{m}^{k}-2\delta_1^{k}-\dots-2\delta_{n}^{k}$     \\ \hline

    \ref*{gl_plus_gl}    & $\eps_1^{2k}+\ldots+\eps_{m}^{2k}-\delta_1^{2k}-\dots-\delta_{n}^{2k}$ \\ \hline
    
    \ref*{osp_plus_osp}, $a\ne 0$  &$\eps_1^{2k}+\ldots+\eps_{m}^{2k}-2\delta_1^{2k}-\dots-2\delta_{n}^{2k}$    \\ \hline
    
%    \ref*{osp_plus_osp}, $a= 0$  & Unknown  \\\hline
    
    \ref*{osp_gl_odd}, \ref*{osp_gl_even} &  $2\eps_1^{2k}+\ldots+2\eps_{m_1}^{2k}-\delta_1^{2k}-\dots-\delta_n^{2k}$   \\\hline

    \ref*{f4_gosp24}  & $\eps_1^{2k}+\eps_2^{2k}-\frac{2}{3}\d^{2k}$\\ \hline
    
    \ref*{f4_sl14}  & $3\eps^2-\d^2,\  3\eps^6-\d^6-16(\d^2-\eps^2)^3,\  (\d^2-\eps^2)^4 \eps^{2i}\d^{2j}$ \\ \hline
    
    \ref*{d21a_osp22}  & $\eps^{2k}-(a+1)\d^{2k}$     \\ \hline
    
    \ref*{g3_d21a}  & $\eps_1^2+\eps_2^2+\eps_3^2,\ \eps_1^2\eps_2^2\eps_3^2$   \\ \hline
    
    \ref*{f4_sl_d21a}  & $3(\eps_1^2+\eps_2^2+\eps_3^2)$,  \\
    & $\eps_1^6+\eps_2^6+\eps_3^6+\sum_{i<j}(\eps_i-\eps_j)^6-\frac{1}{32}\sum(\eps_1\pm\eps_2\pm\eps_3)^6$,\\
    & $\prod(\eps_1\pm\eps_2\pm\eps_3)^2 (\eps_1^{2k}+\eps_2^{2k}+\eps_3^{2k})$\\ \hline
\end{tabular}
\end{center}
\hfill\break
Here $k\in\mathbb Z_{\ge 1}$  and $i,j\in\mathbb Z_{\ge 0}$ in Case \ref*{f4_sl14}.

We remark that for the Case \ref{osp_plus_osp}, $a=0$, we have $I(\aa^*)=I_1(\aa^*)\oplus I_2(\aa^*)$ as shown in Section \ref{sec:hard D-Case}. 
The subring $I_1(\aa^*)$ is generated by $\eps_1^{2k}+\ldots+\eps_{m}^{2k}-\delta_1^{2k}-\dots-\delta_{n}^{2k}$, $k\in\mathbb Z_{\ge 1}$ and $I_2(\aa^*)$ is a module over $I_1(\aa^*)$.
However, we do not know the generators of $I_2(\aa^*)$ as an $I_1(\aa^*)$-module.
%%%%%%%%%%%%%%%%%%%%%%%%%%%%%%%%%%%%%%%%%%%%%%%%%%%%
%..................................................%
%..................................................%
%..................................................%
%..................................................%
%..................................................%
%..................................................%
%%%%%%%%%%%%%%%%%%%%%%%%%%%%%%%%%%%%%%%%%%%%%%%%%%%%

%%%%%%%%%%%%%%%%%%%%%%%%%%%%%%%%%%%%%%%%%%
%%%%%%%%%%%%%%%%%%%%%%%%%%%%%%%%%%%%%%%%%%
%%%%%         bibliography           %%%%%
%%%%%%%%%%%%%%%%%%%%%%%%%%%%%%%%%%%%%%%%%%
%%%%%%%%%%%%%%%%%%%%%%%%%%%%%%%%%%%%%%%%%%


\begin{thebibliography}{90}
\bibitem[A]{A} A. Alldridge, \emph{The Harish-Chandra isomorphism for reductive symmetric superpairs},  Transformation Groups 17 (2012), no. 4, 889–919.

\bibitem[AHZ]{AHZ} A. Alldridge, J. Hilgert, M. R. Zirnbauer, \emph{Chevalley’s restriction theorem for reductive symmetric superpairs.} J. Algebra, 323(4) (2010), 1159--1185.

\bibitem[AS]{AS} Alexander Alldridge, Sebastian Schmittner \emph{Spherical representations of Lie supergroups}, Journal of Functional Analysis
268(6) (2015),  1403--1453.

\bibitem[ASS]{ASS} A. Alldridge, S. Sahi, and H. Salmasian. \emph{Schur Q-functions and the Capelli eigenvalue problem
for the Lie superalgebra q(n).} In Proceedings of the conference in honor of Gestur Olafsson’s 65th birthday, Baton
Rouge, Contemp. Math. American Mathematical Society, Providence, RI, 2018.

\bibitem[CW]{CW} S.-J. Cheng, W. Wang,  \emph{Howe duality for Lie
superalgebras.} Compositio Math. 128 (2001), 55--94.

\bibitem[He]{He} S. Helgason, \emph{Some Results on Invariant Differential Operators on Symmetric Spaces}, American Journal of Mathematics
Vol. 114, No. 4 (1992), 789--811.

\bibitem[Hu]{H} J.E. Humphreys, Reflection Groups and Coxeter Groups, Cambridge Studies in Advanced Mathematics 29, Cambridge University Press.

\bibitem[G]{G} O. Goertsches, \emph{Riemannian Supergeometry}, Math. Z.  260 (2008), No. 3, 557--593.

\bibitem[GK]{GK} M.Gorelik, V.Kac, \emph{On simplicity of vacuum modules}, Adv. Math. 211 (2007), no.2, 621--677.

\bibitem[Serga1]{Serga} V. Serganova,\emph{ Classification of Real Simple
Lie Superalgebras and Symmetric Superspaces}, Funkc. Anal. Prilozh., 17 (3) (1983), 46--54.

\bibitem[Serga2]{Serga2} V. Serganova,\emph{On Generalizations of Root Systems}, Communications in Algebra, 24(13) (1996), 4281--4299.

\bibitem[Serge1]{Serge1} A.N. Sergeev, \emph{The Centre of Enveloping Algebra for Lie Superalgebra $Q(n,\mathbb C)$}, Letters in Mathematical Physics 7 (1983), 177--179.

\bibitem[Serge2]{Serge2} A.N. Sergeev, \emph{The invariant polynomials on simple Lie superalgebras}, Represent. Theory 3 (1999) 250--280.

\bibitem[Sh1]{Sh} A. Sherman, \emph{Spherical indecomposable representations of Lie superalgebras}, J. Algebra, Volume 547 (2020), 262--311.

\bibitem[Sh2]{Sh2} A. Sherman, \emph{Iwasawa Decomposition for Lie Superalgebras}, Journal of Lie Theory 32 (2022) 973–996.

\bibitem[SS]{SS}Siddhartha Sahi and Hadi Salmasian, \emph{The Capelli
problem for $\mathfrak{gl}(m|n)$ and the spectrum of invariant differential
operators.} Adv. Math., 303 (2016), 1--38.

\bibitem[SSS]{SSS} S. Sahi, H. Salmasian, V. Serganova, \emph{The Capelli Eigenvalue Problem for Lie Superalgebras}, Math. Z. 294 (2020), 359–395. 

\bibitem[SV]{SV}A.N. Sergeev, A.P. Veselov, \emph{Deformed quantum
Calogero-Moser problems and Lie superalgebras}, Adv. Math., 192(2) (2005), 341--375.

\bibitem[Z]{Z} M. R. Zirnbauer. \emph{Riemannian symmetric superspaces and their origin in random-matrix
theory.} J. Math. Phys., 37(10) (1996), 4986--5018.


\end{thebibliography}
\end{document}